\newcommand{\COLORON}{1}
\newcommand{\NOTESON}{0}
\newcommand{\Debug}{1}
\newcommand{\comment}[1]{}
\newcommand{\COMMENT}[1]{}
\definecolor{darkgray}{rgb}{0.3,0.3,0.3}
\newcommand{\defi}[1]{{\color{darkgray}\emph{#1}}}
\newtheorem{proposition}{Proposition}[section]
\newtheorem{definition}[proposition]{Definition}
\newtheorem{theorem}[proposition]{Theorem}
\newtheorem{corollary}[proposition]{Corollary}
\newtheorem{lemma}[proposition]{Lemma}
\newtheorem{conjecture}{{\color{red}Conjecture}}[section]
\newtheorem{problem}[conjecture]{{\color{red}Problem}}
\newtheorem{examp}[proposition]{Example}%[section]
\newcommand{\kreis}[1]{\mathaccent"7017\relax #1}
\newcommand{\FIG}{0}
\newcommand{\note}[1]{ 

	\ 

	{\color{blue} \hspace*{-60pt} NOTE: \color{Turquoise}{\small  \tt \begin{minipage}[c]{1.1\textwidth}  #1 \end{minipage} \ignorespacesafterend }} 
	
	\ 
	
	}
\else \newcommand{\note}[1]{} \fi
\newcommand{\fig}[1]{Figure ``{#1}''}
\else \newcommand{\fig}[1]{Figure~\ref{#1}} \fi
\renewcommand{\color}[1]{}
\newcommand{\showFig}[2]{
   \begin{figure}[htbp]
   \centering
   \noindent
   \epsfbox{#1.eps}
   \caption{\small #2}
   \label{#1}
   \end{figure}
}
\newcommand{\N}{\ensuremath{\mathbb N}}
\newcommand{\R}{\ensuremath{\mathbb R}}
\newcommand{\Z}{\ensuremath{\mathbb Z}}
\newcommand{\cq}{\mathcal Q}
\newcommand{\oo}{\ensuremath{\omega}}
\newcommand{\eps}{\ensuremath{\epsilon}}
\newcommand{\sig}{\ensuremath{\sigma}}
\newcommand{\fcg}{\ensuremath{|G|}}
\newcommand{\sm}{\backslash}
\newcommand{\restr}{\upharpoonright}
\newcommand{\nin}{\ensuremath{{n\in\N}}}
\newcommand{\unin}{\ensuremath{[0,1]}}
\newcommand{\limf}[1]{\ensuremath{\lim \inf(#1)}}
\newcommand{\sgl}[1]{\ensuremath{\{#1\}}}
\newcommand{\pth}[2]{\ensuremath{#1}\text{--}\ensuremath{#2}~path}
\newcommand{\pths}[2]{\ensuremath{#1}\text{--}\ensuremath{#2}~paths}
\newcommand{\arc}[2]{\ensuremath{#1}\text{--}\ensuremath{#2}~arc}
\newcommand{\seq}[1]{\ensuremath{(#1_i)_{i\in\N}}} 
\newcommand{\fml}[1]{\ensuremath{\{#1_n\}_{n\in \N}}} 
\newcommand{\ffml}[2]{\ensuremath{\{#1_n\}_{n\in #2}}} % family with given index set
\newcommand{\flo}[2]{\ensuremath{#1}\text{--}\ensuremath{#2}~flow} % x-y flow
\newcommand{\flos}[2]{\ensuremath{#1}\text{--}\ensuremath{#2}~flows} % x-y flows
\newcommand{\g}{\ensuremath{G\ }}
\newcommand{\G}{\ensuremath{G}}
\newcommand{\s}{s}
\newcommand{\Lr}[1]{Lemma~\ref{#1}}
\newcommand{\Tr}[1]{Theorem~\ref{#1}}
\newcommand{\Sr}[1]{Section~\ref{#1}}
\newcommand{\Srs}[2]{Sections~\ref{#1} to~\ref{#2}}
\newcommand{\Cr}[1]{Corollary~\ref{#1}}
\newcommand{\Cnr}[1]{Con\-jecture~\ref{#1}}
\newcommand{\Dr}[1]{De\-fi\-nition~\ref{#1}}
\newcommand{\lf}{locally finite}
\newcommand{\lfg}{locally finite graph}
\newcommand{\nlf}{non-locally-finite}
\newcommand{\nlfg}{non-locally-finite graph}
\renewcommand{\iff}{if and only if}
\newcommand{\fe}{for every}
\newcommand{\st}{such that}
\newcommand{\ti}{there is}
\newcommand{\tho}{there holds}
\newcommand{\obda}{without loss of generality}
\newcommand{\wrt}{with respect to}
\newcommand{\inm}{infinitely many}
\newcommand{\FC}{Freudenthal compactification}
\newcommand{\tocir}{topological circle}
\newcommand{\topa}{topological path}
\newcommand{\labtequ}[2]{ \begin{equation} \label{#1} 	\begin{minipage}[c]{0.9\textwidth}  #2 \end{minipage} \ignorespacesafterend \end{equation} }
\newcommand{\mySection}[2]{}
\newcommand{\DK}{Diestel and K\"uhn}
\newcommand{\CDB}{\cite[Section~8.5]{diestelBook05}}
\newcommand{\unit}{\mathbb 1}
\newcommand{\gid}[1]{\overline{#1}}
\newcommand{\ltopf}[1]{\ensuremath{#1\text{-}TOP}}
\newcommand{\ltop}{\ensuremath{\ltopf{\ell}}}
\newcommand{\ltopx}[1]{\ensuremath{ |#1|_\ell }}
\newcommand{\ltp}{\ltopx{$G$}}
\newcommand{\ltpf}[1]{\ensuremath{|G|_{#1}}}
\newcommand{\blg}{\ensuremath{\partial^\ell G}}
\newcommand{\lER}{\ensuremath{\ell: E(G) \to \R^+_*}}
\newcommand{\eti}{\ensuremath{|G|}}
\newcommand{\Getop}{\etopg}
\newcommand{\eqT}{\approx}
\newcommand{\ltpn}{\ensuremath{\ltpr{n}}}
\newcommand{\ltpr}[1]{\ensuremath{\ltp \sm \kreis{E_{#1}}}}
\newcommand{\ltprv}[1]{\ensuremath{\ltp^{#1}}}
\newcommand{\ltpnv}{\ensuremath{\ltprv{n}}}
\renewcommand{\Getop}{\ensuremath{||G||}}
\newcommand{\are}{\vec{e}}
\newcommand{\arE}{\vec{E}}
\newcommand{\arf}{\vec{f}}
\renewcommand{\arg}{\vec{g}}
\newcommand{\bp}{boundary point}
\newcommand{\finl}{\ensuremath{\sum_{e \in E(G)} \ell(e) < \infty}}
\newcommand{\kfl}{Kirchhoff's node law}
\newcommand{\ksl}{Kirchhoff's cycle law}
\newcommand{\cutr}{non-elusive} %cut respecting}
\newcommand{\Cutr}{Non-elusive} %cut respecting}
\newcommand{\dbst}[1]{\ensuremath{#1 ^{**}}}
\newcommand{\refl}{relaxed flow}
\title{Uniqueness of electrical currents in a network of finite total resistance\footnote{Published in {\em Journal of the L.M.S.} Vol.\ 82 Nr.\ 1.} }%The {D}irichlet problem in a network of finite total resistance}
\author{Agelos Georgakopoulos\thanks{Supported by a grant of the German-Israeli Foundation and by  FWF grant P-19115-N18. This work was conceived and partly written when the author was a postdoc at the University of Hamburg.} \medskip \\
 {Technische Universit\"at Graz}\\
  {Steyrergasse 30, 8010}\\
  {Graz, Austria}\\
  \\
  \small Mathematics Subject Classification: 05C21, 05C80
}
\date{}
\begin{document}
\maketitle

\noindent

\begin{abstract}
We show that if the sum of the resistances of an electrical network $N$ is finite, then there is a unique electrical current in $N$ provided we do not allow, in a sense 
%to be
made precise in the paper, any flow to escape to infinity.
\end{abstract}

\section{Introduction}

Electrical networks are physical objects but also useful tools in mathematics. For example, they are  closely related to random walks \cite{LyonsBook,WoessBook09},
they find applications in the study of Riemannian manifolds \cite{KanaiRough2,KanaiRough,CThyperb}, and
they are related to various problems in combinatorics \cite{biggs}. An electrical network $N$ has an underlying graph \g and a function $r: E(G) \to \R^+$ assigning resistances to the edges of \G. If \g is finite, then the electrical current in $N$ ---between two fixed vertices $p,q$ and with fixed intensity $I$--- is the unique flow satisfying \ksl, which demands that the potential differences sum to zero along every cycle of the graph. Recall that a flow in a graph by definition satisfies \kfl, which demands that current is preserved at every vertex other than $p$ and $q$. See \Sr{sdefs} for more precise statements of these laws. If \g is infinite then several such flows may exist, and one of the standard problems in the study of infinite electrical networks is to specify under what conditions such a flow is unique, see e.g.\ \cite{woessCurrents,thomInfNet}. 

Our main result is that if the sum of all resistances in a network $N$ is finite, then there is a unique electrical current in $N$, provided we do no allow any flow to escape to infinity; more precisely, we require that for every finite edge-cut $F$ of $G$ that does not separate the source $p$ from the sink $q$ the net flow through $F$ is zero. We call a flow satisfying this condition  \defi{\cutr}. To see the necessity of this requirement consider the network of \fig{draynet}.
%\vspace*{-0.2cm}
\showFig{draynet}{A flow escaping to infinity}
This network admits several flows, all of which satisfy \ksl\ as there are no cycles: one of these flows runs only along the edge $pq$;  another can be obtained by sending a flow from $p$ all the way to the left end, and collecting the same amount of flow from the right end into $q$ as depicted in \fig{draynet}. We consider the latter flow to be rather pathological, since it is permitted even if we disconnect the graph by removing the edge $pq$. And indeed, it is not a \cutr\ flow. The interested reader will experiment with more complicated networks, and convince himself that requiring a flow to be \cutr\ is a natural way of preventing a flow from using infinity in an abusive manner. It is worth remarking that in an 1-ended graph every flow is \cutr. \Cutr\ flows allowed a generalisation of the well-known Max-Flow Min-Cut theorem to infinite networks \cite{mfmc} (they were called finite-cut-respecting flows in the latter paper).

We can now state our main result:

\begin{theorem}\label{finr}
Let $N=(G,r,p,q,I)$ be a \lf\ network with $\sum_{e\in E} r(e)<\infty$. Then there is a unique \cutr\ \flo{p}{q} with intensity $I$ and finite energy in $N$ that satisfies \ksl.
\end{theorem}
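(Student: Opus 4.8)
The plan is to obtain existence by a weak-compactness argument on finite truncations, and uniqueness from an energy identity, the key structural fact in both being that $\sum_{e}r(e)<\infty$ turns $|G|$ into a compact metric space on which the potential of any finite-energy flow is continuous.

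For existence, fix an exhaustion $G_1\subseteq G_2\subseteq\dotsb$ of $G$ by finite connected subgraphs with $p,q\in G_1$, let $i_n$ be the (classical, unique) electrical current from $p$ to $q$ of intensity $I$ in the finite network $G_n$, and extend $i_n$ by $0$ to all of $E$. Comparing the energy-minimizing $i_n$ with $I$ times the unit flow along a $p$--$q$ path $P\subseteq G_1$ gives $\sum_e r(e)i_n(e)^2\le I^2\sum_{e\in P}r(e)\le I^2\sum_e r(e)<\infty$, so $(i_n)$ is bounded in the Hilbert space $\ell^2(E,r)$ with $\langle f,g\rangle=\sum_e r(e)f(e)g(e)$; pass to a weakly convergent subsequence $i_n\rightharpoonup i$. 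Then $i$ has finite energy, testing against single edges shows $i_n(e)\to i(e)$ for each $e$, and hence (each relevant condition involving, by local finiteness, only finitely many edges) $i$ satisfies \kfl\ with intensity $I$ and \ksl\ around every finite cycle since each $i_n$ does so for all large $n$; and for every finite edge-cut $F$ not separating $p$ from $q$ the net flow of $i_n$ across $F$ is $0$ once $p,q\in G_n$ — because $i_n$ is a flow of intensity $I$ supported on the finite connected subgraph $G_n$ and $p,q$ lie on the same side of $F$ — so the net flow of $i$ across $F$ is $0$ and $i$ is \cutr.

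For uniqueness, let $h:=f_1-f_2$ for two flows as in the statement; it is a \cutr\ circulation of finite energy satisfying \ksl, and it suffices to show $h=0$, equivalently $\sum_e r(e)h(e)^2=0$. From \ksl\ for finite cycles one gets a potential $\phi\colon V\to\R$, unique up to a constant, with $\phi(u)-\phi(v)=r(e)h(e)$ for each oriented edge $e=(u,v)$; Cauchy--Schwarz along a near-geodesic yields $|\phi(x)-\phi(y)|\le d_r(x,y)^{1/2}(\sum_e r(e)h(e)^2)^{1/2}$, where $d_r$ assigns length $r(e)$ to $e$, so — as $\sum_e r(e)<\infty$ makes $(V,d_r)$ totally bounded with completion $|G|$ — $\phi$ extends continuously to $|G|$. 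Continuity then makes the potential drops of $\phi$ telescope to $0$ along every topological circuit (its arcs through ends contribute no jump). Now use that a \cutr\ circulation decomposes as a non-cancelling sum $h=\sum_k\lambda_k\,\unit_{D_k}$ over topological circuits $D_k$ (the topological analogue of the decomposition of a circulation into cycles; this is where \cite{mfmc}-type flow decomposition enters, intensity $0$ meaning no $p$--$q$ arcs are needed); since $\sum_k|\lambda_k|\sum_{e\in D_k}r(e)|h(e)|=\sum_e r(e)|h(e)|^2<\infty$ one may rearrange, and with Ohm's law
\[
\sum_e r(e)h(e)^2=\sum_{e=(u,v)}\bigl(\phi(u)-\phi(v)\bigr)h(e)=\sum_k\lambda_k\sum_{e\in D_k}\bigl(\text{drop of }\phi\text{ along }D_k\bigr)=0 .
\]
Hence $h=0$ and $f_1=f_2$. (By the same token, the $i$ produced above also satisfies \ksl\ in the stronger, topological form.)

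The main obstacle is the flow-decomposition step together with the absolute-convergence justification of the rearrangement; if one wants to avoid invoking a decomposition theorem, one can argue by exhaustion instead, writing $\sum_{e\subseteq V_n}r(e)h(e)^2$ (the energy of $h$ inside $V_n$) as $-\sum_i\sum_{e\in E(V_n,C_i)}\phi(v_e)\,h_e^{\mathrm{out}}$ over the components $C_i$ of $G-V_n$ by Abel summation, then using that each net flow into $C_i$ vanishes (\cutr) to replace $\phi(v_e)$ by $\phi(v_e)-c_i$ for any constant $c_i$, and bounding by $(\max_i\mathrm{osc}_{C_i}\phi)\cdot\sum_{e\in E(V_n,V\setminus V_n)}|h(e)|$. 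The delicate point there is to choose the exhaustion so that the boundary flow $\sum_{e\in E(V_n,V\setminus V_n)}|h(e)|$ stays bounded while $\mathrm{osc}_{C_i}\phi\to0$ uniformly in $i$: taking the $V_n$ to be balls of the metric $d_r$ and using a co-area bound $\int\sum_{e\text{ crossing radius }\rho}|h(e)|\,d\rho\le\bigl((\sum_e r(e))(\sum_e r(e)h(e)^2)\bigr)^{1/2}<\infty$ to select the radii, with the compactness of $(|G|,d_r)$ forcing the components of $G-V_n$ to have vanishing $d_r$-diameter, makes it work. In both routes the point is identical: finite total resistance gives compactness of $|G|$ and continuity of $\phi$, and the \cutr\ condition excludes exactly the ``flows along the metric boundary'' such as the pathological circulation on the double ray, so no energy can survive.
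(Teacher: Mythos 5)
Your existence argument is sound and amounts to a variant of the paper's second (compactness) proof of \Tr{Texist}, with the pleasant extra observation that the classical current of an \emph{uncontracted} finite truncation $G_n$, extended by zero, is already \cutr\ (sum \eqref{kI} over the vertices of $G_n$ on the $q$-free side of the cut), so non-elusiveness survives the edgewise limit. Likewise, the potential $\phi$ with $|\phi(x)-\phi(y)|\le d_r(x,y)^{1/2}W(h)^{1/2}$ and its continuous extension to $\fcg$ via \Tr{finl} is correct and close in spirit to \Lr{indep}.

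The uniqueness half, however, has a genuine gap, and it sits exactly where the paper invests its main technical effort. Your first route rests on two unproved claims. First, ``continuity makes the potential drops telescope to $0$ along every topological circuit'' does not follow from continuity: a continuous function on a circle can carry all of its variation on the nowhere dense, zero-length closed set $C\sm\bigcup\kreis{E}(C)$ of ends (compare the Cantor function, which is H\"older continuous and constant on every complementary interval of a measure-zero set yet increases by $1$ overall); your H\"older-$\tfrac12$ bound does not exclude such a singular part. The statement you need is precisely $\sum_{\are\in\arE(C)}v(\are)=0$ for topological circles, i.e.\ \Cr{tinfkir}, and the paper has to work for it: it splits $C$ into a \emph{sparse} family of finite cycles (\Lr{sparse}), the technical core of the whole argument, using \ltop\ and \Lr{epsNLF} in an essential way. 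Second, the decomposition of a \cutr\ circulation of finite energy as a non-cancelling real-weighted sum of topological circuits is asserted, not proved; the decomposition results you gesture at concern $\Z_2$ edge sets or the existence direction of max-flow min-cut, not arbitrary real circulations, and the paper deliberately avoids needing any such decomposition: it extracts a \emph{single} oriented circle along which $z>0$ everywhere, as a limit of cycles in the contracted graphs $\dbst{G_n}$ via \Lr{hp} and \Lr{oriarc}. Your fallback route (Abel summation over an exhaustion) founders on the point you yourself flag: integrability of $\rho\mapsto\sum_{e\,\text{crossing}\,\rho}|h(e)|$ does not yield radii $\rho_n$ along which this boundary flow stays bounded (it may blow up like $(D-\rho)^{-1/2}$ while remaining integrable), and the compensating decay of the oscillation is not quantified, so the product is not shown to tend to $0$. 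As written, the argument assumes a statement equivalent in strength to \Cr{tinfkir}.
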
 
% *** ---- *** 

The energy of a flow $f$ is defined by $W(f):=\sum_{e\in E(G)} f^2(e) r(e)$. The requirement that the energy be finite is necessary in the above statement (see \Sr{suniq}), and very common in the literature.

The essential part of \Tr{finr} is the uniqueness rather than the existence of the desired flow. Let us briefly consider the case when $G$ is finite. To prove uniqueness in that case, suppose there are two flows with the required properties, and consider their difference $z$. Then $z$ is a circulation in $G$, and so there must be some cycle along which $z$ is always positive in the same direction. But such a cycle yields a contradiction to the fact that $z$ must satisfy \ksl\ being the difference of two flows that do. Now back to the case when $G$ is infinite, the above argument breaks down as an infinite circulation need not traverse any finite cycle in the same direction. However, it is possible to prove that if a circulation is \cutr, then it must traverse some (finite or infinite), \defi{topological circle}; that is, a homeomorphic image of the real unit circle $S^1$ in the \defi{end-compactification $|G|$} of $G$. Such circles, introduced by \DK\ \cite{cyclesI}, have been the object of intense study recently \cite[Section~8.5]{diestelBook05}, and some of the acquired machinery is used here; see \Sr{defEnds} for more about circles. As an intermediate result we obtain that these circles must also satisfy \ksl\ if all finite cycles do.

Our main result also relies on some recent results from \cite{ltop} about  topologies on graphs induced by an assignment of lengths to the edges; see \Sr{defsLtop} for details.

The proof of \Tr{finr} spans \Srs{sint}{suniq}. In \Sr{scond} we discuss the complementary case of finite total conductance. In \Sr{NLF} we extend to \nlfg s. Finally, in \Sr{stoch} we discuss the relation of our results to stochastic processes and offer a conjecture related to the Dirichlet problem.

\section{Definitions and basic facts} \label{sdefs}

We will use the terminology of Diestel \cite{diestelBook05} for graph theoretical terms and the terminology of \cite{armstrong} for topological ones.

A network is a tuple $N=(G,r,p,q,I)$, where $G$ is an (undirected) {(mu\-lti--) graph}, 
\comment{taking directed graphs will be cumbersome if you allow multiple edges; Thomassen and Lyons also use undirected graphs.}
$r$ is a mapping assigning a \defi{resistance} $r(e)\in \R^+$ to each edge $e$ of \G, $p,q\in V(G)$, and $I\in \R$ is a constant. A \defi{flow} in $N$ is a real-valued $\flo{p}{q}$ in \g with \defi{intensity} $I$ (intuitively $p$ and $q$ are connected to a battery generating a constant current of intensity $I$). A flow $f$ satisfies, by definition, \kfl\ at every vertex except $p$ and $q$, and the net flow leaving $p$ is the \defi{intensity} of $f$. More formally, we have

\medskip

{\bf \kfl:}
For every vertex $x\in V(G)$ there holds
\begin{equation} \label{kI}
\sum_{xy\in E(G)} f(x,y) = \begin{cases}
0& \text{ if $x \neq p,q$,}\\
I& \text{ if $x= p$,}\\
-I& \text{ if $x=q$}.
\end{cases}
\tag{\ensuremath{K1}} \end{equation}
(Recall that $f$ is a function from $V^2$ to $\R$ satisfying $f(x,y)=-f(y,x)$ \fe\ $xy\in E(G)$.) A \defi{circulation} is a flow of intensity 0.

A flow $f$ in $N$ is called \defi{\cutr} if for every finite cut $(X,X')$ such that both $p,q$ lie in $X$ (or both lie in $X'$) there holds $f(X,X')=0$, where $f(X,X')=\sum_{xy\in E, x\in X, y\in X'} f(x,y)$. It follows  that if $p\in X$ and $q \in X'$ then $f(X,X')= f(\sgl{p}, V(G) - p) = - f(\sgl{q}, V(G) - q)= I$. Note that if $G$ is finite or 1-ended (see \Sr{defEnds} for the definition of an end) then every flow is \cutr.

%For a graph $G$ and $p,q\in V(G)$, a \defi{$\flo{p}{q}$} $f$ of value $I$ is an assignment of a direction and a value to each edge of $G$ that satisfies Kirchhoff's first law:

If $G$ is finite, then one defines the \defi{electrical current} $i$ to be the flow in $N$ satisfying \ksl, and it is well-known ---and not hard to prove--- that this flow always exists and that it is unique.

A \defi{directed cycle}  is a cycle together with a choice of one of its two possible orientations. If $C$ is a directed cycle then we let \defi{$\arE(C)$} denote the set of ordered pairs $(x,y)$ such that $xy$ is an edge of $C$ traversed from $x$ to $y$ in the chosen orientation of $C$.

\medskip

{\bf \ksl:}  
For every %closed walk $C=x_1 e_1 x_2 e_2 \ldots e_{k-1} x_k=x_1$
directed cycle $C$ in $G$ there holds 
\begin{equation} \label{KII} \begin{minipage}[c]{0.85\textwidth}
$\sum_{\are \in \arE(C)} v(\are) = 0$
%$\sum_{1\leq j < k} v(x_j,x_{j+1}) =0$ %where $i_*(e_j)= i(e_j)$ if $e_j$ is directed from $x_j$ to $x_{j+1}$ and $i_*(e_j):= -i(e_j)$ otherwise.
\end{minipage}\ignorespacesafterend \tag{\ensuremath{K2}} \end{equation}
where $v(\are):= i(\are) r(\are)$ is the \defi{voltage drop} or \defi{potential difference} induced by $i$ along $e$ (in physics the equation $v(\are)= i(\are) r(\are)$  is known as \defi{Ohm's law}).

An important concept for both finite and infinite electrical networks is that of energy: the \defi{energy of the flow $f$} is defined by $W(f):= \sum_{xy\in E(G)} f^2(x,y) r(e)$. ($W(f)$ is usually called ``energy'' in mathematics, but in physics it is called ``power''.)

The following theorem is well known, and indicates the importance of the concept of energy for electrical networks. See \Sr{secExist} for a proof.

\begin{theorem} \label{exist}
Let $N$ be a finite network and let $W=W(I,p,q)$ be the infimum of $W(f)$ over all flows $f$ in $N$. Then, there is a unique flow $i$ in $N$ satisfying $W(i)=W$. This flow satisfies \ksl.
\end{theorem}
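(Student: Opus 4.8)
The plan is to prove Theorem~\ref{exist} by the direct (Dirichlet-principle) method: one shows the infimum $W$ is attained, that the minimiser is unique, and that any minimiser must satisfy Kirchhoff's cycle law.

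\medskip

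\noindent\textbf{Existence of a minimiser.} The set $\mathcal{F}$ of flows in $N$ with intensity $I$ is a nonempty affine subspace of $\R^{E(G)}$ (nonempty because $G$ is finite and connected, or, if $p$ and $q$ lie in different components, one reduces to the component containing them and a trivial statement otherwise); fixing any particular flow $f_0$, every flow is $f_0 + z$ with $z$ a circulation, so $\mathcal{F}$ is a translate of the (finite-dimensional) cycle space of $G$. On this space $W(f) = \sum_{xy \in E} f^2(x,y)\, r(e)$ is a strictly convex quadratic form (strictly convex because each $r(e) > 0$), and $W(f) \to \infty$ as $\|f\| \to \infty$ along $\mathcal{F}$. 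Hence $W$ restricted to $\mathcal{F}$ is a coercive continuous function on a closed set, so it attains its infimum $W$ at some flow $i$.

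\medskip

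\noindent\textbf{Uniqueness.} Strict convexity of $W$ on the affine space $\mathcal{F}$ gives uniqueness directly: if $i_1 \neq i_2$ both attained the minimum, then $\tfrac12(i_1+i_2) \in \mathcal{F}$ would have strictly smaller energy, a contradiction. (Equivalently, one can argue via the parallelogram-type identity $W(i_1) + W(i_2) = 2W\!\left(\tfrac{i_1+i_2}{2}\right) + \tfrac12 W(i_1 - i_2)$, valid because $W$ is a quadratic form, which forces $W(i_1 - i_2) = 0$ and hence $i_1 = i_2$ since all resistances are positive.)

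\medskip

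\noindent\textbf{The minimiser satisfies \ksl.} This is the step I expect to require the most care, though in the finite case it is still short: it is a first-order-optimality / variational argument. Let $C$ be any directed cycle in $G$ and let $z_C$ be the circulation that sends one unit of flow around $C$ in its chosen orientation. For $t \in \R$, the flow $i + t z_C$ lies in $\mathcal{F}$, so the function $g(t) := W(i + t z_C)$ is minimised at $t = 0$. Expanding,
\[
g(t) = W(i) + 2t \sum_{\are \in \arE(C)} i(\are)\, r(\are) + t^2 W(z_C),
\]
and $g'(0) = 0$ gives exactly $\sum_{\are \in \arE(C)} i(\are)\, r(\are) = 0$, which is precisely \eqref{KII} for $C$. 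Since $C$ was arbitrary, $i$ satisfies \ksl. (One could alternatively invoke that the standard existence-and-uniqueness theorem for the electrical current quoted earlier in the section already produces a flow satisfying \ksl, and then identify it with the energy minimiser via the orthogonality $\sum_e i(e) z(e) r(e) = 0$ for every circulation $z$ — which follows by decomposing $z$ into directed cycles — but the variational argument above is self-contained.)

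\medskip

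The only genuine subtlety is making sure $\mathcal{F}$ is nonempty and that the relevant spaces are finite-dimensional so that coercivity yields a minimiser; both are immediate since $G$ is finite. The remaining work is the routine quadratic-form computation indicated above.
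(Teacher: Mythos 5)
Your proof is correct, and it follows the same Dirichlet-principle strategy as the paper: minimise the energy over the affine space of flows of intensity $I$, then extract \ksl\ from first-order optimality (the paper phrases this step as subtracting a small circular flow around a violating directed cycle to strictly decrease the energy, which is your $g'(0)=0$ computation in contrapositive form). Where you diverge is in how the minimiser is produced and shown unique: the paper, which carries out the argument in \Sr{secExist} in the generality of locally finite networks, obtains existence and uniqueness in one stroke from the Hilbert-space projection theorem (\Lr{hilb}) applied to the closed convex set of flows, or alternatively existence alone from Tychonoff compactness; you instead use coercivity of the positive-definite quadratic form $W$ on the finite-dimensional affine space $\mathcal{F}$ together with strict convexity (your parallelogram identity is precisely the mechanism inside the proof of \Lr{hilb}). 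Your version is more elementary and fully self-contained for finite $G$, but it does not extend to the infinite setting the paper needs later, where the space of flows is infinite-dimensional and coercivity alone does not yield attainment. The only loose end is non-emptiness of $\mathcal{F}$, which requires $p$ and $q$ to lie in a common component when $I\neq 0$; this is implicit in the paper's standing assumptions on networks and is not a genuine gap.
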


\subsection{Ends, the \FC\ and wild circles in graphs} \label{defEnds}

Let \g be a graph, fixed throughout this section.

A $1$-way infinite path is
called a \defi{ray}, a $2$-way infinite path is
a \defi{double ray}. A \defi{tail} of a ray $R$ is an infinite (co-final) subpath of $R$.
Two rays $R,L$ in $G$ are \defi{equivalent} if no finite set of edges
separates them. The corresponding equivalence
classes of rays are the \defi{end\s} of $G$. We
denote the set of end\s\ of $G$ by \defi{$\Omega =
\Omega(G)$}. %A ray belonging to the end\ \oo\ is an \defi{\oo-ray}. 

We now endow the space consisting of \G, considered as a 1-complex, and its ends with the topology \eti. Firstly, every edge $e\in E(G)$ inherits the open sets corresponding to open sets of $[0, 1]$. Moreover, for every finite edge-set $S \subset E(G)$, 
we declare all sets of the form 
\begin{equation}
C(S, \oo) \cup \Omega(S,\oo) \cup
E'(S,\oo)
\label{eq}
\end{equation}
to be open; here, $C(S, \oo)$ is any component of $G - S$ and $\Omega(S,\oo)$ denotes the set of
all ends of $G$ having a ray in $C(S, \oo)$ and $E'(S,\oo)$ is any union of
half-edges $(z, y]$, one for every edge
$e = xy$ in $S$ with $y$ lying in $C(S, \oo)$. Let \Getop\ denote the topological space of $G \cup \Omega$ endowed with the topology generated by
the above open sets. Moreover, let \defi{\eti} denote the space obtained from \Getop\ by identifying any two points that have the same open neighbourhoods. (Our notation is slightly non-standard: our ``ends'' are usually called edge-ends in the literature, and the symbol $|G|$ often denotes a different space if \g is \nlf. However, for a \lf\ \g these differences disappear and our notation agrees with the mainstream.) If a point $x$ of \eti\ resulted from the identification of a vertex with some other points (possibly also vertices), then, with slight abuse of notation, we still call $x$ a \defi{vertex}. It is easy to see that two vertices $v,w$ of \g are identified in \eti\ \iff\ there are \inm\ edge-disjoint \pths{v}{w}. 

It it well-known (see \cite{ends}) that \fcg\ coincides with the \defi{Freudenthal compactification}
\cite{Freudenthal31} of the 1-complex $G$ if \g is \lf. 

The study of \fcg, in particular of topological circles therein, has been a very active field recently. It has been demonstrated by the work of several authors (\cite{locFinTutte,partition,LocFinMacLane,degree,cyclesI,cyclesII,hp,fleisch,geo,hcs}) that many well known results about paths and cycles in finite graphs can be generalised to \lf\ ones if the classical concepts of path and cycle are interpreted topologically, i.e.\ replaced by the concepts of a (topological) arc and circle in \fcg; see \Sr{defsTop} for some definitions and \CDB\ for an exposition of this field. An example of such a \tocir\ is formed by a double ray both rays of which converge to the same end together with that end. There can however be much more exciting circles in \fcg: in \fig{wild}, the $\aleph_0$ many thick double rays together with the continuum many ends of the graph combine to form a single \tocir\ $W$, the so-called \defi{wild} circle. The double rays are arranged within $W$ like the rational numbers within the reals: between any two there is a third one; see \cite{cyclesI} for a more precise description of $W$.

\showFig{wild}{The `wild' circle of \DK, formed by infinitely many (thick) double rays and con\-tinuum many ends.}

For a circle $C$ we let $E(C)$ denote the set of edges it traverses. We define directed circles and the notation $\arE(C)$ similarly to the case when $C$ is a finite cycle.

\subsection{Topological paths, circles, etc.} \label{defsTop}

A \defi{circle} in a topological space $X$ is a homeomorphic copy of the unit circle $S^1$ of $\R^2$ in $X$.  An \defi{arc} $R$ in 
$X$ is a homeomorphic image of the real interval $[0, 1]$ in $X$. Its \defi{endpoints} are the images of $0$ and $1$ under any homeomorphism from \unin\ to $R$. If $x,y\in R$ then $xRy$ denotes the subarc of $R$ with endpoints $x,y$. A \defi{topological path} in $X$ is a continuous map from a closed real interval to $X$. 

\newcommand{\elll}{{\it l}}

Let $\sigma: [a,b]\to X$ be a topological path in a metric space $(X,d)$. For a finite sequence $S=s_1, s_2, \ldots, s_k$ of points in $[a,b]$, let $\ell(S):= \sum_{1\leq i< k} d(\sigma(s_i),\sigma(s_{i+1}))$, and define the \defi{length} of $\sigma$ to be $\elll(\sigma):=\sup_S \ell(S)$, where the supremum ranges over 
all finite sequences $S=s_1, s_2, \ldots, s_k$ with $a=s_1<s_2< \ldots <s_k=b$. If $C$ is an arc or a circle in $(X,d)$, then we define its length $\elll(C)$ to be the length of a surjective topological path $\sigma:[0,1] \to C$ that is injective on $(0,1)$; it is easy to see that $\elll(C)$ does not depend on the choice of \sig.

\subsection{\ltop} \label{defsLtop}

%Every graph \g in this paper is considered to be a 1-compex, which means that the edges of $G$ are homeomorphic copies of the real unit interval. 

Fix a graph $G$ and a function \lER. This naturally gives rise to a distance function $d_\ell$ between the points of \G, and we let \ltp, also called \ltop, denote the corresponding metric space. 

To make this more precise, for each edge $e\in E(G)$ fix a homeomorphism $\sigma_e$ from $e$ to the real interval $[0,\ell(e)]$; by means of $\sigma_e$, any \defi{half-edge} $f$, i.e.\ any connected subset of an edge,  with endpoints $a,b$ obtains a length $\ell(f)$, namely $\ell(f):=|\sigma_e(a)-\sigma_e(b)|$. Now use $\ell$ to define a distance function on $G$: for any $x,y \in V(G)$ let $d_\ell(x,y)= \inf_{P \text{ is an \pth{x}{y}}} \ell(P)$, where $\ell(P):= \sum_{e \in E(P)} \ell(e)$. For points $x,y \in G$ that might lie in the interior of an edge we define $d_\ell(x,y)$ similarly, but instead of graph-theoretical paths we consider arcs in the 1-complex $G$: let $d_\ell(x,y)= \inf_{P \text{ is an \arc{x}{y}}} \left (\sum_{f \text{ is an edge or half-edge in $P$} } \ell(f) \right )$. By identifying any two vertices $x,x'$ of $G$ for which $d_\ell(x,x')=0$ holds we obtain a metric space $(\gid{G}, d_\ell)$. Note that if \g is \lf\ then $\gid{G}=G$. Let $\ltp$ be the completion of $(\gid{G}, d_\ell)$.

The \defi{\bp s} of $G$ are the elements of the set $\blg:=\ltp \sm \pi(\gid{G})$, where $\pi$ is the canonical embedding of $\gid{G}$ in its completion \ltp.

\note{For a subspace $X$ of $\ltp$ we write $E(X)$ for the set of edges contained in $X$, and we write $\kreis{E}(X)$ for the set of maximal half-edges contained in $X$; note that $\kreis{E}(X) \supseteq E(X)$. Similarly, for a topological path $\tau$ we write $E(\tau)$ for the set of edges contained in the image of $\tau$.}

%In this paper we will often encounter special cases of \ltp\ that induce some other well known topology on some space $G'$ containing $G$, e.g.\ the \FC\ of $G$. In order to be able to formally state the fact that the two topologies are the same, we introduce the following notation. Let $X,X'$ be  topological spaces that contain another topological space $G$. We will write $X \eqT_G X'$, or simply $X \eqT X'$ if $G$ is fixed, if the identity on $G$ extends to a homeomorphism between $G'$ and $G''$.

The space \ltp\ was introduced in \cite{ltop}, where several important special cases were found, and many basic facts were proved. Here we list some of these facts that we are going to use in this paper. For the first of them, it is easy to realize the connection to the current paper by interpreting the resistance of an edge as its length:

\begin{theorem}[\cite{ltop}] \label{finl}
If  $\sum_{e \in E(G)} \ell(e) < \infty$ then $\ltp \eqT \fcg$.
\end{theorem}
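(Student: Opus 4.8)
The plan is to produce a continuous bijection $\phi\colon\ltp\to\fcg$ that restricts to the identity on $\gid{G}$, regarded as a subspace of both spaces via the canonical embeddings; since we will also see that $\ltp$ is compact and $\fcg$ is Hausdorff, and a continuous bijection from a compact space onto a Hausdorff space is a homeomorphism, this yields $\ltp\eqT\fcg$. We may assume $G$ is connected, so that $d_\ell$ is a finite metric, and we note that $\sum_{e\in E(G)}\ell(e)<\infty$ forces $E(G)$, hence $G$, to be countable. Two metric estimates, both immediate from finiteness of the total length, do all the work. First, given $\eps>0$, pick a finite $S\subseteq E(G)$ with $\sum_{e\notin S}\ell(e)<\eps$; then every component of $G-S$ has $d_\ell$-diameter $\le\eps$, and any basic open set of the form~\eqref{eq} has $d_\ell$-diameter at most $\eps$ plus the lengths of its (freely chosen, hence arbitrarily short) half-edges. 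Second, if $x,y$ lie in distinct components of $G-S$ and in no interior of an edge of $S$, then any arc between them in the $1$-complex must traverse a whole edge of $S$, so $d_\ell(x,y)\ge\min_{e\in S}\ell(e)>0$. Since $\bigcup_{e\in S}e$ is a finite union of compact intervals, the first estimate shows $(\gid{G},d_\ell)$ is totally bounded, so its completion $\ltp$ is compact.

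Next I would extend the identity $\gid{G}\to\gid{G}$ to $\phi$. Let $\xi$ be a \bp, the limit of a $d_\ell$-Cauchy sequence $(x_n)$ in $\gid{G}$ with no limit there; then the $x_n$ eventually leave every finite subgraph, and by the second estimate, for each finite $S$ the sequence is eventually contained in a single component $C_S$ of $G-S$. Exhausting $E(G)$ by finite sets, the nested family $(C_S)_S$ determines a unique end $\oo$ of $G$, and by the definition of the topology of \Getop\ the sequence $(x_n)$ converges in $\fcg$ to the point represented by $\oo$; put $\phi(\xi)$ equal to that point. Interleaving two Cauchy representatives of the same $\xi$ shows $\phi$ is well defined. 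It is surjective: every end contains a ray, a ray is $d_\ell$-Cauchy since (its edges being distinct) its tails have length tending to $0$, and such a ray converges in $\ltp$ to a point that $\phi$ sends to its end. It is injective: if $\phi(\xi_1)=\phi(\xi_2)$, then choosing $S$ with $\sum_{e\notin S}\ell(e)<\eps$ both Cauchy sequences are eventually in the same component $C_S$, of diameter $\le\eps$, so $d_\ell(\xi_1,\xi_2)\le\eps$ for every $\eps$, hence $\xi_1=\xi_2$; together with $\phi$ being the identity on $\gid{G}$ this gives injectivity.

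Finally I would check continuity of $\phi$ at a point $z$: a basic $\fcg$-neighbourhood $U$ of $\phi(z)$ of the form~\eqref{eq} comes from some finite $S$; the point $z$ is attached to a single component $C$ of $G-S$ (in the sense above) and has positive $d_\ell$-distance from the compact set $\bigcup_{e\in S}e$, so by the second estimate a small enough ball $B(z,\delta)$ lies inside $C$, whence $\phi(B(z,\delta))\subseteq U$; continuity on the interior of an edge is obvious. Thus $\phi$ is a continuous bijection from the compact space $\ltp$ onto the Hausdorff space $\fcg$, hence a homeomorphism, which is the assertion $\ltp\eqT\fcg$. The step I expect to fight hardest with is the one where the nested family $(C_S)_S$ is turned into a bona fide point of $\fcg$ to which $(x_n)$ genuinely converges there: this amounts to reconciling the inverse-limit-of-components picture of ends with the generating sets~\eqref{eq} of the topology and keeping careful track of the identifications made in passing from \Getop\ to $\fcg$ --- and carrying all of this out without the comfort of local finiteness.
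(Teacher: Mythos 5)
The paper does not actually prove this statement: it is imported verbatim from \cite{ltop}, so there is no in-paper argument to compare yours against. Judged on its own merits, your proposal is correct in outline and is the natural argument (and, as far as I can tell, essentially the one in the cited source): show $(\gid{G},d_\ell)$ is totally bounded so that its completion \ltp\ is compact, extend the identity to a bijection onto \fcg\ by matching boundary points with ends via nested components of $G-S$, and invoke the fact that a continuous bijection from a compact space to a Hausdorff space is a homeomorphism --- precisely the lemma from \cite{armstrong} that the present paper keeps on hand. Three points deserve to be made explicit. For total boundedness you need not only that each component of $G-S$ has diameter $<\eps$ but also that there are only finitely many of them; this holds because (with $G$ connected) every component of $G-S$ other than $G$ itself contains an endvertex of some edge of $S$, so there are at most $2|S|$ of them. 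For continuity at a vertex $z$ incident with an edge of $S$ your claim that $z$ has positive distance from $\bigcup_{e\in S}e$ fails; this is harmless, since the basic open set~\eqref{eq} may be chosen with the half-edges in $E'(S,\oo)$ long enough to absorb $B(z,\delta)\cap\kreis{e}$ for each such $e$, but it should be said. Finally, the identification worry you flag at the end is resolved by your own diameter estimate: a vertex $v$ and an end $\oo$ acquire the same neighbourhoods in \Getop\ exactly when $v\in C(S,\oo)$ for every finite $S$, and in that case any Cauchy sequence determining $\oo$ is eventually in a component of diameter $<\eps$ containing $v$, hence already converges to $v$ in $d_\ell$; so no separate boundary point is created and injectivity survives the passage from \Getop\ to \fcg. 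With these additions the argument goes through.
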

(Where ``$\eqT$'' means that the identity on \g extends to a homeomorphism between the two spaces.)

For our next lemma, fix an enumeration $e_0,e_1,\ldots$ of $E(G)$, and let $E_n:=\{e_0,\ldots, e_n\}$. Moreover, let $\kreis{e_n}$ denote the set of inner points of the edge $e_n$, and let $\kreis{E}_n:= \bigcup \{\kreis{e_0},\ldots, \kreis{e_n}\}$.

\begin{lemma}[\cite{ltop}] \label{epsNLF}
Let $C$ be a circle or arc in \ltp\ such that $E(C)$ is dense in $C$. Then, for every $\epsilon\in \R^+$ there is an $n\in\N$ such that for every subarc of $C$ in $\ltpn$ connecting two vertices $v,w$ there holds  $d_\ell(v,w)<\epsilon$.
\end{lemma}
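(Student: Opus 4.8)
The plan is to transport the whole problem to the parametrising space $K$ --- where $K=S^{1}$ if $C$ is a circle and $K=[0,1]$ if $C$ is an arc --- via a homeomorphism $\sigma\colon K\to C$. Since $K$ is a compact metric space and $\sigma$ is continuous, $\sigma$ is uniformly continuous by the Heine--Cantor theorem; this is where the metric structure of $\ltp$ enters. Given $\epsilon>0$, uniform continuity supplies some $\rho>0$ --- which, after shrinking, we may assume is smaller than half the diameter of $K$ --- such that $d_\ell(\sigma(s),\sigma(t))\ge\epsilon$ implies $d_K(s,t)\ge\rho$. The point of this is that a subarc $A$ of $C$ joining vertices $v,w$ is carried by $\sigma^{-1}$ to a closed subarc of $K$ with endpoints $\sigma^{-1}(v),\sigma^{-1}(w)$, and an elementary computation in $S^{1}$ or $[0,1]$ shows that $d_\ell(v,w)\ge\epsilon$ forces this subarc to have length at least $\rho$.

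So it suffices to find an $n$ with the property that $V_n:=\sigma^{-1}(\kreis{E}_n)$ meets every closed subarc of $K$ of length at least $\rho$. Granting this, the lemma follows: if $A$ were a subarc of $C$ in $\ltpn$ joining vertices $v,w$ with $d_\ell(v,w)\ge\epsilon$, then $\sigma^{-1}(A)$ would be a closed subarc of $K$ of length $\ge\rho$, hence it would meet $V_n$, hence $A$ would meet $\kreis{E}_n$, contradicting $A\subseteq\ltpn=\ltp\setminus\kreis{E}_n$.

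To produce such an $n$, I first claim that $\bigcup_n V_n$ is dense in $K$. Since $\overline{\kreis{e}}=e$ for every edge $e$, the density of $E(C)$ in $C$ gives that $\bigcup_{e\in E(C)}\kreis{e}$ is dense in $C$, so its $\sigma^{-1}$-image is dense in $K$; and this image is contained in $\bigcup_n V_n=\sigma^{-1}\!\big(\bigcup_{e\in E(G)}\kreis{e}\big)$. Moreover each $V_n$ is open and the $V_n$ increase with $n$. Now cover the compact space $K$ by finitely many open balls of radius $\rho/5$; each of them meets $\bigcup_n V_n$, hence meets $V_m$ for some $m$, so taking $n$ to be the largest of these finitely many indices, $V_n$ meets every ball of the cover. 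Finally, let $J$ be any closed subarc of $K$ of length $\ge\rho$. It contains a closed subarc of length exactly $\rho$, whose midpoint $x$ satisfies $B(x,\rho/2)\subseteq J$. Pick a ball $B(z,\rho/5)$ of the cover with $x\in B(z,\rho/5)$, and a point $y\in V_n\cap B(z,\rho/5)$; then $d_K(x,y)<\rho/5+\rho/5<\rho/2$, so $y\in B(x,\rho/2)\subseteq J$, and thus $V_n\cap J\neq\emptyset$. This proves the claim and completes the argument.

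The crux --- and the only step that genuinely uses compactness of $C$ --- is the passage from ``$\bigcup_n V_n$ is dense and open'' to ``a single $V_n$ already meets every subarc of length $\rho$'', handled above by a finite-subcover argument. Everything else is routine: uniform continuity of $\sigma$, the identity $\overline{\kreis{e}}=e$, and the elementary metric geometry of closed arcs in $S^{1}$ and in $[0,1]$; one should simply keep the circle/arc dichotomy in mind throughout, the arc case being marginally easier.
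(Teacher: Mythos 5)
Your argument is correct. Note that the paper does not actually prove this lemma -- it is imported verbatim from \cite{ltop} -- so there is no in-paper proof to compare against; what you have supplied is a self-contained substitute. Your route is clean: transport to the parametrising space $K$, use Heine--Cantor to convert the $\epsilon$ on $d_\ell$-distances of endpoints into a lower bound $\rho$ on the $K$-length of the corresponding subarc, observe that the sets $V_n=\sigma^{-1}(\kreis{E}_n)$ are open, increasing and have dense union (here the hypothesis that $E(C)$ is dense in $C$ is used, via $\overline{\kreis{e}}=e$), and then use compactness of $K$ once more, through a finite cover by $\rho/5$-balls, to find a single $n$ for which $V_n$ meets every subarc of length at least $\rho$. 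All the small points check out: $\kreis{E}_n$ is open in $\ltp$ because inner points of edges have positive distance to the rest of the space, the preimage of a subarc of $C$ is a closed subarc of $K$ whose length dominates the $K$-distance of its endpoints, and $2\rho/5<\rho/2$ closes the ball computation. The only stylistic remark is that the two uses of compactness (uniform continuity, then the finite subcover) could be merged into a single accumulation-point contradiction -- assume for each $n$ a bad subarc avoiding $\kreis{E}_n$, extract a nested or convergent subsequence, and obtain a nondegenerate subarc of $C$ meeting no edge, contradicting density of $E(C)$ -- which is the shape of argument the source \cite{ltop} and a vestigial variant in this paper's own source gesture at; but your version is, if anything, more transparent.
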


\begin{lemma}[\cite{ltop}] \label{lcisle}
If $\sum_{e\in E(G)} \ell(e)<\infty$ then for every circle or arc $C$ in \ltp\  there holds $\elll(C)=\sum_{e\in E(C)} \ell(e)$.
\end{lemma}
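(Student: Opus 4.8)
The plan is to show the two inequalities $\elll(C) \le \sum_{e \in E(C)} \ell(e)$ and $\elll(C) \ge \sum_{e \in E(C)} \ell(e)$ separately. For the lower bound, fix a surjective topological path $\sigma : [0,1] \to C$ that is injective on $(0,1)$, as in the definition of $\elll(C)$. Each edge $e \in E(C)$ is traversed by $\sigma$, so its two endpoints (or, if only a half-edge is traversed, the endpoints of that half-edge, which by the hypothesis $\sum \ell(e) < \infty$ and Theorem \ref{finl} actually forces whole edges — more on this below) occur as values $\sigma(s)$, $\sigma(t)$ with $d_\ell$-distance exactly $\ell(e)$ along the edge. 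Picking finitely many edges $e \in E(C)$ and forming a finite sequence $S$ in $[0,1]$ that visits, in order, the endpoints of each chosen edge, the sum $\ell(S)$ is at least the total length of those edges (the edge is the unique geodesic between its endpoints inside it, but even if there were a shorter route in $\ltp$, consecutive terms of $S$ can be chosen to be the two ends of the same edge so that $d_\ell$ between them is at most $\ell(e)$ — I must be careful here and instead use that $\sigma$ restricted to the parameter interval of a single edge already contributes $\ell(e)$ to the variation since $\sigma$ is injective there and its image is an interval of length $\ell(e)$). Taking the supremum over finite subsets of $E(C)$ gives $\elll(C) \ge \sum_{e \in E(C)} \ell(e)$.

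For the upper bound I would argue that $E(C)$ is dense in $C$: since $\sum_{e} \ell(e) < \infty$, any point of $C$ that is a boundary point or an end is a $d_\ell$-limit of interior points of edges, and a subarc of $C$ avoiding all edges would have to have zero length, hence be a single point. With $E(C)$ dense in $C$, I can apply Lemma \ref{epsNLF}: given $\epsilon > 0$ choose $n$ so that every subarc of $C$ lying in $\ltpn$ and joining two vertices has $d_\ell$-diameter less than $\epsilon$. Now $C$ meets only finitely many edges from $E_n$, so $C \setminus \kreis{E}_n$ is a finite disjoint union of subarcs (and possibly isolated vertices), each of which lies in $\ltpn$ between two vertices and hence has length $< \epsilon$; there are at most $n+2$ such pieces (bounded in terms of how $C$ crosses $E_n$). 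Writing $\sigma$ as a concatenation of the pieces over edges in $E_n \cap E(C)$ and the leftover pieces, the total length of $C$ is at most $\sum_{e \in E_n \cap E(C)} \ell(e) + (\text{number of leftover pieces}) \cdot \epsilon \le \sum_{e \in E(C)} \ell(e) + (n+2)\epsilon$. Since the number of leftover pieces is controlled by $n$, I actually want $\epsilon$ small relative to $n$; so instead I fix $\epsilon' > 0$, let $\epsilon = \epsilon'/(n+2)$ after first getting the bound on the number of pieces, or more cleanly, first choose $m$ large enough that $\sum_{e \notin E_m} \ell(e) < \epsilon'$, then apply Lemma \ref{epsNLF} with a value of $\epsilon$ depending on $m$. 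Letting $\epsilon' \to 0$ yields $\elll(C) \le \sum_{e \in E(C)} \ell(e)$.

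The main obstacle I expect is the bookkeeping in the upper bound: making the decomposition of $C$ into edge-pieces and short connecting pieces precise, and correctly bounding the number of connecting pieces so that the error term $(\text{pieces}) \cdot \epsilon$ can be made arbitrarily small. The additivity of length under concatenation of a (possibly infinite) family of subarcs of $C$ also needs a short justification — one orders the pieces along $\sigma$ and uses that length is the supremum of $\ell(S)$ over finite sequences $S$, so any finite sequence only sees finitely many pieces and the bound passes to the supremum. A secondary subtlety is confirming that $C$ traverses each of its edges fully (i.e.\ $\kreis{E}(C) = E(C)$ modulo endpoints), which follows because an endpoint of a half-edge traversed by $C$ would be a point of $C$ of degree one inside that edge, impossible for a circle, and for an arc only the two endpoints of the arc could be such — a case that contributes nothing to the sum and can be absorbed.
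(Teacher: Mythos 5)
This lemma is imported from \cite{ltop}; the present paper gives no proof of it, so your argument can only be judged on its own terms. Your lower bound $\elll(C)\ge\sum_{e\in E(C)}\ell(e)$ is essentially sound: one checks that for two interior points of an edge $e$ whose along-edge distance to both endpoints of $e$ is at least their mutual along-edge distance, $d_\ell$ coincides with the along-edge distance (any competing arc must first leave $e$ through an endpoint), so a fine equispaced partition of the parameter interval of $e$ witnesses length at least $\ell(e)-2\ell(e)/k$, and summing over finitely many edges and taking suprema works. Two caveats: the density of $E(C)$ in $C$ should be justified via the total disconnectedness of the non-edge part of $\ltp\eqT\fcg$ rather than by the circular-sounding ``a subarc avoiding all edges has zero length''; and for an arc ending in the interior of an edge the stated equality is simply false (the half-edge contributes to $\elll(C)$ but its edge is not in $E(C)$), so your remark that this case ``contributes nothing to the sum and can be absorbed'' has the inequality backwards --- one must restrict to arcs whose endpoints are vertices or boundary points.

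The upper bound contains a genuine gap. \Lr{epsNLF} bounds, for each individual subarc of $C$ in \ltpn\ with vertex endpoints $v,w$, only the distance $d_\ell(v,w)$ --- so at best the diameter of each leftover piece --- whereas your estimate $\elll(C)\le\sum_{e\in E_n\cap E(C)}\ell(e)+(\text{number of leftover pieces})\cdot\epsilon$ requires either the \emph{length} of each piece or at least control of the \emph{sum} of all the leftover contributions. The two parameters are coupled the wrong way round: to make $\epsilon$ small you must pass to the $n=n(\epsilon)$ that \Lr{epsNLF} returns, and the number of components of $C\sm\kreis{E}_n$ is of order $n(\epsilon)$, over which you have no control; nothing in the quoted lemmas excludes $n(\epsilon)\cdot\epsilon\to\infty$. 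Your proposed repair (``first choose $m$ with $\sum_{e\notin E_m}\ell(e)<\epsilon'$, then apply \Lr{epsNLF} with $\epsilon$ depending on $m$'') does not escape this: the lemma then guarantees shortness only for subarcs avoiding $\kreis{E}_n$ for its own, possibly much larger, $n$, of which there are again up to $n+2$; while decomposing along $E_m$ instead leaves $m+2$ pieces each bounded only by $\sum_{e\notin E_m}\ell(e)<\epsilon'$, i.e.\ the useless total $(m+2)\epsilon'$ with $m=m(\epsilon')$. What is actually needed is a summed form of \Lr{epsNLF} --- that the total of $d_\ell(v_j,w_j)$ over \emph{all} maximal leftover subarcs tends to $0$ as $n\to\infty$ --- or, equivalently, the estimate $d_\ell(x,y)\le\sum_{e\in E(xCy)}\ell(e)$ for points $x,y$ of $C$, from which the upper bound follows by summing over the partition. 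Neither statement follows formally from the lemmas quoted in this paper; supplying one of them is the real content of the proof in \cite{ltop}, and as written your argument does not close.
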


\section{Intermediate results} \label{sint}

\subsection{Splitting infinite circles into cycles}

In this section we show that it is possible to write any infinite circle in a graph as a sum of a sparse family of finite cycles. We will later use this fact to show that the infinite circles of a network must also satisfy \eqref{KII} if all finite ones do, which will play an important role in the proof of our main result in \Sr{suniq}.

Call a family of edge sets \defi{sparse} if no edge appears in more than three members of the family.

\begin{lemma}\label{sparse}
Let \g be a countable graph and let ${C}$ be a directed circle in \eti. Then there is a sparse family \ffml{D}{\N}\ of finite directed cycles $D_n$ in \g \st\ $\arE(C) = \sum_n \arE(D_n)$,
\end{lemma}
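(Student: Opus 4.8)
The plan is to work in a convenient metric model of $\eti$. Since $G$ is countable we may fix a length function $\ell\colon E(G)\to\R^+$ with $\sum_{e\in E(G)}\ell(e)<\infty$, say $\ell(e_n):=2^{-n}$. By \Tr{finl} the identity on $G$ extends to a homeomorphism between $\eti$ and $\ltp$, so we may regard $C$ as a directed circle in $\ltp$; by \Lr{lcisle} it has finite length, and hence $E(C)$ is dense in $C$ (a subarc meeting no edge would have positive length but, by \Lr{lcisle}, length $0$). If $E(C)$ is finite then $C$ is already a finite cycle and we may take $D_0:=C$ and nothing else, so assume $E(C)$ is infinite.

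Next I would build a sequence of finite ``approximating'' cycles $C^{(0)},C^{(1)},\dots$ in $G$. For each $k$, apply \Lr{epsNLF} with $\epsilon=2^{-k}$ to obtain $N_k$ (with $N_0<N_1<\cdots$) so that every subarc of $C$ lying in $\ltpr{N_k}$ and joining two vertices joins vertices at $d_\ell$-distance less than $2^{-k}$. Deleting from $C$ the interiors of the finitely many edges in $E(C)\cap E_{N_k}$ splits $C$ into finitely many arcs, each with both endpoints vertices; each such arc lies in $\ltpr{N_k}$, so its endpoints $w,w'$ satisfy $d_\ell(w,w')<2^{-k}$, and hence there is a finite $w$--$w'$ path $P$ in $G$ with $\ell(P)<2^{-k}$. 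Replacing each of these arcs of $C$ by such a short path, and reading everything off in the cyclic order inherited from $C$, produces a closed walk $C^{(k)}$ in $G$, whose edge set with multiplicities is an edge-disjoint union of finitely many finite directed cycles. The key gain from working with lengths is that a replacing path at level $k$ has length below $2^{-k}$, so a fixed edge $e$ can occur in a replacing path only for the finitely many levels $k$ with $2^{-k}>\ell(e)$.

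With these in hand I would telescope. Carried out coherently across levels so that $E(C)\cap E(C^{(k)})$ is increasing in $k$ (once an edge of $C$ is exposed it is never hidden again), the orientations agree with that of $C$, and $C^{(k)}\to C$, one gets
\[
\arE(C)=\arE(C^{(0)})+\sum_{k\ge 0}\bigl(\arE(C^{(k+1)})-\arE(C^{(k)})\bigr),
\]
a well-defined (locally finite, by the finiteness observation above) sum in the free abelian group on directed edges modulo reversal. Each summand lies in the cycle space and its support decomposes into finitely many edge-disjoint finite directed cycles; listing all these cycles, over all summands, as $\ffml{D}{\N}$ gives $\arE(C)=\sum_n\arE(D_n)$, with each edge appearing in the decomposition of any single summand at most once.

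Finally one must check sparseness, which I expect to be the real work. An edge $e\in E(C)$ lies in $C^{(k)}$ for all $k$ from the moment it is exposed, so it contributes to at most one of the above summands, hence to at most one $D_n$. An edge $e\notin E(C)$ occurs only inside replacing paths; here the coherent construction must be arranged so that the set of levels at which $e$ is used is essentially a single interval --- an unsubdivided arc keeps its replacing path at the next level, and when an arc is genuinely subdivided by a newly exposed edge of $C$ its replacing path is discarded in favour of strictly shorter ones, which (having length below $2^{-k-1}$ and hence living in tiny balls that eventually miss the finitely many already-used edges) can be chosen disjoint from all previously used edges. Then $e$ enters the ``replacing region'' at one level and leaves it at the next, so it lies in at most two of the differences $\arE(C^{(k+1)})-\arE(C^{(k)})$ plus possibly in $\arE(C^{(0)})$, i.e.\ in at most three $D_n$. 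The main obstacle is precisely making this bookkeeping honest: one must choose the $N_k$, the subdivision pattern of the arcs, and the replacing paths simultaneously so that ``increasing exposure'', ``consistent orientation'', ``short, mutually disjoint replacing paths'', and ``$C^{(k)}\to C$'' all hold together; establishing that such coherent choices exist is the technical heart of the lemma.
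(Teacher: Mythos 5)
Your overall strategy is the one the paper actually uses: pass to \ltp\ for a summable length function via \Tr{finl}, expose the edges of $C$ in stages, join the exposed edges by short connecting paths obtained from \Lr{epsNLF}, and read off the cycles as differences of consecutive finite approximations (the paper organises these differences as closed walks built from ``quasi-circles'' whose blue parts are the not-yet-approximated subarcs of $C$ and whose green parts are the current replacing paths). But there is a genuine gap exactly where you yourself locate ``the technical heart'': the sparseness bookkeeping is asserted rather than established, and the concrete schedule you propose would not deliver it. Fixing $\epsilon=2^{-k}$ at level $k$ in advance does not make the new replacing paths avoid previously used edges: a replacing path chosen at an earlier level $j$ has \emph{total} length below $2^{-j}$, so its individual edges can be far shorter than $2^{-k-1}$, and a level-$(k+1)$ path of length below $2^{-k-1}$ can perfectly well run through them. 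What is needed --- and what the paper does --- is an adaptive threshold: at each step one first takes $h$ to be the minimum length of all edges used so far (together with the edge currently being exposed) and only then invokes \Lr{epsNLF} with this $h$, so that any connecting path of length less than $h$ is automatically edge-disjoint from everything already built. Without this, an edge could recur at many levels (bounded only by your finiteness observation, which depends on $\ell(e)$ and is not uniform), and the bound of three fails.

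A second, smaller inaccuracy: your claim that an edge $e\in E(C)$ meets at most one $D_n$ overlooks that the replacing paths may themselves contain edges of $C$ (nothing prevents a short connecting path from running along a not-yet-exposed edge of $C$). The paper's count is accordingly ``once or three times'' for edges of $C$ and ``twice, in opposite directions, or not at all'' for edges outside $E(C)$; three is precisely what the definition of sparse is calibrated to. So the route is right, but the coherence conditions you list (increasing exposure, consistent orientation, mutual disjointness of replacing paths, convergence to $C$) must be maintained as explicit inductive invariants --- the paper's conditions (i)--(iii) on its family of quasi-circles --- rather than postulated; supplying those invariants and verifying they are preserved at each step is the missing content.
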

where the latter sum is a formal sum of directed edges with coefficients in \Z\ in which two edges cancel out if they correspond to the same edge with opposite orientations.
(\Lr{sparse} was proved implicitely also in \cite{kirch2}.)
% *** ---- *** 
\begin{proof}

Let $e_0, e_1, \ldots$ be an enumeration of $E(G)$ and let $\arf_0, \arf_1, \ldots$ be an enumeration of $\arE(C)$. Pick an assignment $\lER$ \st\ \finl, and recall that by \Tr{finl} \tho\ $\ltp \eqT \eti$.

We will construct \ffml{D}{\N}\ recursively, in \oo\ steps. To begin with, Let $D_0$ be any finite cycle containing $f_0$, directed in the direction of $\arf_0$. Combined with $C$ this cycle gives rise to what we call a \defi{quasi-circle}: that is, a continuous image of $S^1$ comprising a finite path (in this case $D_0 \sm f_0$), called the \defi{green} part of the quasi-circle, and a subarc of $C$ (in this case $C \sm f_0$), called its \defi{blue} part. Note that the green and blue part of a quasi-circle might have some common vertices and edges. We now construct the other $D_i$ inductively, in \oo\ steps. For $i=1,2,\ldots$, suppose we have already constructed $D_0, \ldots D_{i-1}$, and specified a set $\cq_{i-1}$ of quasi-circles (let $\cq_0$ be the singleton containing the above quasi-circle) so that 
\begin{enumerate}
\item \label{si} \fe\ $\arf\in \arE(C)$, either $\arf\in\sum_{j<i} \arE(D_j)$ (in the right direction) or $f$ is contained in the blue part of some quasi-circle in $\cq_{i-1}$;

\item \label{sii}  for any two distinct elements $L,M$ of $\cq_{i-1}$, the blue parts of $M$ and $L$ are disjoint and the green parts of $M$ and $L$ are edge-disjoint.

\item \label{siii} the green part of every element of $\cq_{i-1}$ is contained in $\bigcup_{j<i} D_j$.
\end{enumerate}

Then, in step $i$, let $k=k(i)$ be the least index \st\ $\vec{f}_k \not\in \sum_{j<i} \arE(D_j)$ (we choose this $k$ even if $\overleftarrow{f_k} \in \sum_{j<i} \arE(D_j)$). By \ref{si} there is an $M=M(i) \in \cq_{i-1}$ the blue part of which contains $f_k$, and by \ref{sii} this $M$ is unique.

Let $h= h(i):= \min \{\ell(e) \mid e\in \bigcup_{j<i} E(D_j) \cup \{f_k\} \}$. By our choice of $\ell$ and \Lr{finl}, we can apply \Lr{epsNLF} to $C$, which yields an $n=n(i)$ \st\  for every subarc of $C$ in $\ltpn$ connecting two vertices $v,w$ there holds  $d_\ell(v,w)<h$. As the blue part $M_b$ of $M$ is, by construction, a subarc of $C$, the latter assertion also holds for $M_b$. Let $x,y$ be the endpoints of $M_b$, and let $M_b^x$ (respectively, $M_b^y$) be the component of $M_b \sm \kreis{f_k}$ containing $x$ (resp.\ $y$). We may assume \obda\ that 
\labtequ{obd}{$h< d(M_b^x, M_b^y)/2$,}
where $d(M_b^x, M_b^y)$ is the minimum distance of a point in $M_b^x$ from a point in $M_b^y$, for otherwise we could have chosen a smaller bound than $h$ before applying \Lr{epsNLF}.

Our aim now is to construct the directed trail $D_i$ that contains $\vec{f}_k$ as well as the green part $M_g$ of $M$, the latter traversed in the opposite direction. To achieve this, we need to construct two paths $P_x, P_y$, edge-disjoint from each other and from $M_g$, each path joining an endvertex of $f_k$ to an endvertex of $M_g$. 

To construct $P_x$, let $\arg_1, \ldots, \arg_r$ be an enumeration of the directed edges in $E_n \cap E(M_b^x)$ in the order and orientation they appear on $M_b^x$ as we move from $f_k$ to $x$. Let $P_x^1$ be a path from the endvertex of $f_k$ in $M_b^x$ to the tail of $\arg_1$, and \fe\ $1<j\leq r$ let $P_x^j$ be a path from the head of $\arg_j$ to the tail of $\arg_{j+1}$. Finally, let $P_x^{r+1}$ be a path from the head of $\arg_r$ to $x$ (some of these paths may be trivial).  We can now combine these paths with the edges $g_1, \ldots, g_r$ to obtain a directed $f_k$-$x$~walk, and shortcutting this walk if necessary we can transform it into a path, which path we call $P_x$. Note that shortcutting a walk does not influence the order and direction in which the remaining edges are traversed. We construct $P_y$ similarly. Note that by our choice of $E_n$, we could have chosen the paths $P_x^j$ so that $\ell(P_x^j) <h$ \fe\ $j$. This, and the choice of $h$, implies that no $P_x^j$ can contain an edge in $\bigcup_{j<i} D_j$ as any such edge is longer than $P_x^j$, from which we obtain that
\labtequ{Px}{$P_x \cup P_y$ contains no edge in $(\bigcup_{j<i} D_j) \sm (E_n\cap E(C))$.} Moreover, by \eqref{obd} we may assume that $P_x$ and $P_y$ are disjoint; indeed, if some $P_x^j$ has a vertex in common with some $P_y^{j'}$, then their union $P_x^j\cup P_y^{j'}$ contains an $M_b^x$-$M_b^y$~path of length at most $\ell(P_x^j) + \ell(P_y^{j'})< 2h< d(M_b^x, M_b^y)$, a contradiction.

Let $D_i:= \vec{f}_k P_x (-M_g) (-P_y)$, where $(-P)$ denotes the path $P$ traversed in the inverse direction. Note that $D_i$ is not necessarily a cycle, as we would like it to be, but rather a closed walk, but we will later modify it into a sum of cycles. 
%By \eqref{Px} and the construction of $D_n$ we have
%\labtequ{disj}{???$D_n$ contains no edge in $(\bigcup_{j<i} E(D_j))\sm E(C)$.}
By the construction of $D_i$ we have
\labtequ{back}{$D_i$ traverses $M_g$, and does so in the opposite direction as $M$ does.} %Moreover, \fe\ edge $e\in E(C)$}
Moreover, $D_i$ traverses no edge in $E(G)\sm E(C)$ more than once and traverses each edge in $E(C)$ at most twice (the latter can occur for an edge that happens to lie in $E_n \cap E(C) \cap M_g$).

To  complete step $i$, it remains to define $\cq_i$. To obtain $\cq_i$ from $\cq_{i-1}$, we remove $M$ and add the quasi-circles obtained as follows.
Consider the subspace $M_b^x \sm (E(P_x) \cap \{\kreis{g_1}, \ldots, \kreis{g_r} \})$ of $M_b^x$; note that for every component $K$ of this subspace there is a subpath $P_K$ of $P_x$ connecting the endvertices of $K$. Now $K \cup P_K$ defines a quasi-circle whose blue part is $K$ and whose green part is $P_K$. Similarly for $M_b^x$ and $P_y$. Add all these quasi-circles to $\cq_{i-1} \sm M$ to obtain $\cq_i$. It follows from our choice of $P_K$, \ref{siii} and \eqref{Px} that 
\labtequ{thin}{no edge appears in the green part of more than one element of $\cq:= \bigcup \cq_i$.}
Moreover, it is easy to check that 
\labtequ{union}{$D_i$ is the edge-disjoint union of edges in $\{g_1, \ldots, g_r \} \subset E(C)$ and green parts of elements of $\cq_i$.}
Finally, it is easy to see that \ref{si}, \ref{sii} and \ref{siii} are all satisfied by $\cq_i$ if they were satisfied by $\cq_{i-1}$ (which is the case by our inductive hypothesis).

%, and we consider this quasi-circle $Q$ directed so that 
%\labtequ{q}{$Q$ traverses its blue part $K$ in the same direction as $C$ does.}

We have thus constructed the family \ffml{D}{\N}, and by \eqref{thin} this family has the desired property that no edge is traversed more than three times (in fact, an edge in $E(G) \sm E(C)$ is traversed either twice (in opposite directions) or not at all, and an edge in $E(C)$ is traversed either once or three times). Note that by our choice of the edge $f_k$, every edge of $E(C)$ will eventually appear in the sum $\sum_{j<i} \arE(D_j)$, and in fact with the right orientation, for some step $i$. It also follows from our choice of the edge $f_k$ that every quasi-circle in $\cq$ will be considered as $M(i)$ for some step $i$, and so by \eqref{back} its green part will eventually disappear from the sum of the $D_n$. It follows from these observations and \eqref{union} that $\sum_\nin \arE(D_n) = \vec{E}(C)$.

Thus, the family \ffml{D}{\N}\ has all the desired properties except that $D_n$  is not necessarily a cycle but rather a closed walk that might traverse some edges twice and visit vertices more often. This, however, is easy to amend: if $D_n$ visits some vertex $x$ more than once, then we can split it into closed subwalks that each visit $x$ only once, while traversing each edge in the same direction as $D_n$ does. Performing this operation recursively, we can split $D_n$ into a family $D'_1, \ldots D'_k$ of directed cycles \st\ $\sum \vec{E}(D'_j) = \vec{E}(D_n)$, and so replacing each $D_n$ by such a family we obtain the desired result.
\end{proof}

Interestingly, in the last proof we used \ltop\ to prove an assertion that at first sight does not seem to be related to it. I would be interested to see a graph-theoretical proof of \Lr{sparse}.

\Lr{sparse} motivates the following problem.

\begin{problem}
 Let \g be a \lfg, and let $C$ be a circle in $\fcg$. Prove that $G$ has a planar subgraph $H$ containing $E(C)$ \st\ $E(C)$ is also  the edge-set of a circle in $|H|$ (so that $H$ can be drawn in the plane with $E(C)$ bounding a face).
\end{problem}

\comment{
\begin{equation} \label{KIIp} \begin{minipage}[c]{0.85\textwidth}
$\sum_{e\in \arE(C)} v(\are) =0$ %where $i_*(e_j)= i(e_j)$ if $e_j$ is directed from $x_j$ to $x_{j+1}$ and $i_*(e_j):= -i(e_j)$ otherwise.
\end{minipage}\ignorespacesafterend \tag{\ensuremath{K'2}} \end{equation}
}

\subsection{Shortcutting topological paths}

It is a well-known and useful fact that the image of a \topa\ in a Hausdorff space contains an arc with the same endpoints \cite{ElemTop}. For our uniqueness proof in \Sr{suniq} we will need a stronger version of this fact, saying, intuitively, that this arc can be chosen so that its points are traversed in the same order as in the original \topa. In order to state this more formally, let $\sig, \tau: \unin\to X$ be topological paths in a  Hausdorff space $X$. We say that $\tau$ \defi{shortcuts} $\sig$ if there is a monotone increasing injection $m : \unin \to \unin$ such that $\tau(x)=\sig \circ m(x)$ for every $x\in \unin$. We will prove that
\begin{lemma}\label{oriarc}
Let $\sigma: \unin \to X$ be a topological path in a Hausdorff space $X$, and suppose its endpoints $u:= \sigma(0), v:=\sigma(1)$ are distinct. Then, there is an injective topological \pth{u}{v}\ in $X$ that shortcuts \sig.
\end{lemma}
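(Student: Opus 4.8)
The plan is to run the classical "cut out the loops" argument for extracting an arc from a path, but to do it in a way that is visibly order-preserving, so that the resulting injection $m$ is monotone. The key device is the following: for each point $p$ in the image of $\sigma$, consider the closed set $\sigma^{-1}(p) \subseteq [0,1]$ and record its supremum; more generally, define a relation that collapses each "loop" of $\sigma$ to a single parameter value. Concretely, I would proceed as follows.

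\emph{Step 1: Define a suitable ``last visit'' reparametrisation.} For $t \in [0,1]$ let $\lambda(t) := \sup\{ s \in [0,1] : \sigma(s) = \sigma(t)\}$. Since $\sigma$ is continuous and $X$ is Hausdorff, $\sigma^{-1}(\sigma(t))$ is closed, so the sup is attained and $\sigma(\lambda(t)) = \sigma(t)$; also $\lambda(0)=0$ is false in general, but $\lambda$ is ``idempotent'' in the sense that $\lambda(\lambda(t)) = \lambda(t)$, and $\lambda(1) = 1$. Let $A := \lambda([0,1]) = \{ t : \lambda(t) = t\}$ be the set of ``last-visit parameters''; note $0$ need not lie in $A$ if $u$ is revisited, so instead set $a_0 := \lambda(0)$ and work on $[a_0,1]$, noting $\sigma(a_0) = u$. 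The candidate image of the arc is $\sigma(A) = \sigma([a_0,1])$, which still contains $u$ and $v$ and is a continuous image of the compact set $\overline{A}$.

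\emph{Step 2: Show $\sigma$ restricted to $A$ is injective and $A$ is compact, then build the arc.} By construction, if $t,t' \in A$ with $\sigma(t) = \sigma(t')$ then $t = \lambda(t) = \lambda(t') = t'$, so $\sigma|_A$ is injective. The set $A$ is closed: if $t_n \in A$, $t_n \to t$, one checks using continuity and the definition of $\lambda$ that $\lambda(t) = t$ (the only subtlety is upper semicontinuity of $\lambda$, which follows from closedness of the ``graph'' $\{(t,s): \sigma(s)=\sigma(t)\}$ in $[0,1]^2$, again by Hausdorffness). Hence $A$ is compact. Now $\sigma|_A : A \to \sigma(A)$ is a continuous injection from a compact space to a Hausdorff space, hence a homeomorphism onto its image (this is exactly \Lr{inj} if that lemma is invoked; here it is the standard fact). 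So $\sigma(A)$ is homeomorphic to $A$, a compact subset of $[0,1]$ containing $a_0$ and $1$. This is \emph{almost} an arc but $A$ may have gaps; I would collapse each gap $(\alpha,\beta)$ of $A$ to a point — equivalently, precompose with the ``Cantor-function-like'' monotone continuous surjection $[0,1] \to A^{*}$ that is constant on the closure of each complementary interval — to get a continuous monotone surjection onto an arc. Here one must check that collapsing a gap of $A$ does not identify two points with different $\sigma$-images: if $(\alpha,\beta)$ is a gap with $\alpha,\beta \in A$, I claim $\sigma(\alpha) = \sigma(\beta)$, for otherwise $\alpha < \lambda(\alpha)$... wait, actually one needs: there is no $t\in A$ strictly between, and $\sigma$ glues $\alpha$ to $\beta$; this is the point where the ``last visit'' choice pays off — any $s\in(\alpha,\beta)$ has $\lambda(s)\in A$ and $\lambda(s)\notin(\alpha,\beta)$, but $\lambda(s)\ge s>\alpha$ forces $\lambda(s)\ge\beta$; combined with the analogous ``first visit'' bookkeeping one shows the endpoints coincide. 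So the quotient arc $R := \sigma(A)$ with the quotient topology from $A^*$ is a genuine arc from $u$ to $v$.

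\emph{Step 3: Produce the monotone injection $m$ and verify shortcutting.} Parametrise the arc $R$ by a homeomorphism $\rho : [0,1] \to R$ compatible with the order inherited from $A$ (so $\rho(0) = u = \sigma(a_0)$, $\rho(1) = v$). For each $x \in [0,1]$, $\rho(x) \in \sigma(A)$, so there is a unique $t_x \in A$ with $\sigma(t_x) = \rho(x)$; set $m(x) := t_x$. Monotonicity of $m$ is immediate from the fact that $\rho$ respects the linear order on $A \subseteq [0,1]$ and $\sigma|_A$ is an order isomorphism $A \to R$ (order on $R$ induced from $A$); injectivity of $m$ follows from injectivity of $\sigma|_A$. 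And $\tau := \rho : [0,1] \to X$ satisfies $\tau(x) = \rho(x) = \sigma(t_x) = \sigma(m(x))$, so $\tau$ shortcuts $\sigma$; it is injective as a homeomorphism onto the arc $R$, and has endpoints $u,v$. This is the desired injective topological $u$--$v$ path.

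\emph{Main obstacle.} The genuinely delicate point is Step 2–3: ensuring that after collapsing the ``gaps'' of the compact set $A$ one still has a well-defined injective map, i.e.\ that $\sigma$ takes equal values at the two endpoints of every complementary interval of $A$, and that no new identifications are forced. Equivalently, one must set up the ``last visit'' (and possibly a companion ``first visit'') bookkeeping carefully enough that the monotone reparametrisation is both well-defined and injective; the topology (compactness, Hausdorffness) only enters to guarantee closedness of the relevant preimages and the homeomorphism-from-compact-to-Hausdorff step. Everything else is the routine loop-erasure argument. An alternative, perhaps cleaner, route is to mimic the standard proof of the arc lemma (Wilder / the reference \cite{ElemTop}) verbatim and simply observe at each stage that the subpaths one throws away are ``loops'', so the retained parameter intervals are nested and ordered, whence the limiting reparametrisation is automatically monotone — I would present whichever of these is shorter, but the monotonicity claim is the only thing that needs genuine care beyond the classical statement.
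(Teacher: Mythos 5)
There is a genuine gap in Step~2, and it is fatal to the proposed ``last visit'' construction. The set $A=\{t:\lambda(t)=t\}$ is in general \emph{not} closed, and --- worse --- $\sigma(A)$ need not be an arc, nor do the gap endpoints of $\overline A$ have equal $\sigma$-images. Concretely, take $X=\R^2$ and let $\sigma$ run injectively from $u=(0,0)$ to $P=(1,0)$ on $[0,\tfrac14]$, from $P$ to $Q=(1,1)$ on $[\tfrac14,\tfrac12]$, back from $Q$ to $P$ on $[\tfrac12,\tfrac34]$, and from $P$ to $v=(2,0)$ on $[\tfrac34,1]$. Every point of the initial segment except $P$ is visited exactly once, every interior point of the $P$--$Q$ excursion has its last visit on the return trip, and $P$ itself has last visit at $t=\tfrac34$; one computes $A=[0,\tfrac14)\cup[\tfrac12,1]$. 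So $A$ is not closed (refuting the semicontinuity argument: upper semicontinuity of $\lambda$ gives $\limsup\lambda(t_n)\le\lambda(t)$, which does not force $\lambda(t)\le t$); the unique gap of $\overline A$ is $(\tfrac14,\tfrac12)$ with $\sigma(\tfrac14)=P\neq Q=\sigma(\tfrac12)$, so the gap-collapsing map is not even well defined; and $\sigma(\tfrac14)=\sigma(\tfrac34)=P$ with $\tfrac12\in\overline A$ strictly between, so the intended quotient cannot be injective. Indeed $\sigma(\overline A)$ is a $T$-shaped set, not an arc: the one-pass last-visit rule erases the outbound half of the excursion but keeps the return half, which is exactly why the cited source (and the paper) obtains the set $F$ of \Lr{hall} by a minimality/Zorn-type argument rather than by any explicit first/last-visit rule.

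Your fallback remark --- use the classical arc lemma verbatim and observe that the retained parameters are ordered --- is essentially the paper's route, but it understates the remaining work. The paper takes $F$ from \Lr{hall} as a black box (gap endpoints of $F$ \emph{do} have equal images, and $\sigma(F)$ \emph{is} an arc), extends $\sigma\restr F$ to a path $\sigma'$ constant on the gaps (this is where the monotone injection $m$ comes from), and then must still convert the almost injective $\sigma'$ into an injective path: it does so by pulling back the metric of $\unin$ along a homeomorphism with the arc $\sigma(F)$ and reparametrising $\sigma'$ by arc length, checking well-definedness, injectivity and continuity of the result. That reparametrisation step is not automatic from ``the intervals are nested and ordered'' and is also missing from your Step~3, where the claim that the homeomorphism $\rho$ can be chosen ``compatible with the order inherited from $A$'' is asserted rather than proved. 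So the proposal as written does not yield a proof; the salvageable part is precisely the part that coincides with the paper's argument.
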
 
% *** ---- *** 

For this we are going to need the following
\begin{lemma}[{\cite[Lemmas 5.12, 5.13]{ElemTop}}] \label{hall}
Let $X$ be a Hausdorff space, and let $\sig: \unin \to X$ be a topological path in $X$. Then, there exists a closed subset $F$ of $\unin$ such that
\begin{enumerate}
\item \label{hi}   $\sig(0), \sig(1) \in \sig(F)$;
\item \label{hii}  If $C$ is any component of $\unin \sm F$ and $x,y$ are the end points of $C$, then $\sig(x)=\sig(y)$;
\item No proper closed subset of $F$ satisfies both~\ref{hi} and~\ref{hii}, and
\item \label{hiv} $\sig(F)$ is an arc.
\end{enumerate}
\end{lemma}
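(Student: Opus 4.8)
The plan is to obtain $F$ as a minimal element, under inclusion, of the family $\mathcal F$ of all closed sets $F\subseteq[0,1]$ satisfying \ref{hi} and \ref{hii}, and then to read off the arc structure from that minimality. The family $\mathcal F$ is nonempty since $[0,1]\in\mathcal F$ (there are no components, so \ref{hii} holds vacuously). To apply Zorn's lemma I order $\mathcal F$ by reverse inclusion and check that the intersection $F=\bigcap_\alpha F_\alpha$ of any chain again lies in $\mathcal F$. Closedness is immediate. For \ref{hi}, note that $\sigma^{-1}(\sigma(0))$ is closed since $X$ is Hausdorff, so the sets $F_\alpha\cap\sigma^{-1}(\sigma(0))$ form a nested family of nonempty compact sets and hence meet; likewise for $\sigma(1)$. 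For \ref{hii} I would use a compactness-plus-limit argument: given a component $(x,y)$ of $[0,1]\setminus F$, every compact subinterval $[x',y']\subset(x,y)$ misses some single $F_\alpha$ (the open sets $[0,1]\setminus F_\alpha$ increase to cover it), so $x',y'$ lie in one component $(a,b)$ of $[0,1]\setminus F_\alpha$ with $x\le a<b\le y$ and $\sigma(a)=\sigma(b)$ by \ref{hii} for $F_\alpha$; letting $x'\downarrow x$ and $y'\uparrow y$ forces $a\to x$ and $b\to y$, and continuity together with uniqueness of limits in the Hausdorff space $X$ yields $\sigma(x)=\sigma(y)$. Zorn then produces a minimal $F$ satisfying \ref{hi}, \ref{hii} and the minimality clause.

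Two facts about this minimal $F$ drive the rest. First, applying \ref{hii} to the end components $[0,\min F)$ and $(\max F,1]$ gives the normalisation $\sigma(\min F)=\sigma(0)$ and $\sigma(\max F)=\sigma(1)$. Second, and crucially, I claim that whenever $s<t$ lie in $F$ with $\sigma(s)=\sigma(t)$, the interval $(s,t)$ contains no point of $F$. Indeed, $F':=F\setminus(s,t)$ is closed; it still satisfies \ref{hii}, because the only new component is $(s,t)$ itself, whose endpoints have equal image, while $\min F,\max F\notin(s,t)$ (as $\min F\le s$ and $t\le\max F$) so the end components are unchanged; and it satisfies \ref{hi}, since $\min F,\max F$ survive in $F'$ and already realise $\sigma(0),\sigma(1)$. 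By minimality $F'=F$, i.e. $F\cap(s,t)=\emptyset$, as claimed. This is exactly the step where the normalisation is needed, to protect condition \ref{hi} under the surgery.

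Next I would extend $\sigma|_F$ to a map $\tilde\sigma:[0,1]\to X$ by letting $\tilde\sigma$ be the common endpoint value on each component of $[0,1]\setminus F$ (well defined by \ref{hii}). One checks that $\tilde\sigma$ is continuous and $\tilde\sigma([0,1])=\sigma(F)$, so $\sigma(F)$ is a continuous image of $[0,1]$ in a Hausdorff space, hence a metrizable continuum. I then show every fibre $\tilde\sigma^{-1}(P)$ is a closed interval: taking $s,t$ to be the least and greatest elements of this closed fibre, one verifies $\sigma(s)=\sigma(t)=P$, so the structural claim gives $(s,t)\cap F=\emptyset$, making $(s,t)$ a single component on which $\tilde\sigma\equiv P$; thus $\tilde\sigma\equiv P$ on all of $[s,t]$ and the fibre equals $[s,t]$ (the cases where a fibre reaches $0$ or $1$ are handled the same way through the end components, using the normalisation).

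Consequently $\tilde\sigma$ is a monotone (connected-fibre) continuous surjection of $[0,1]$ onto $\sigma(F)$. Since a monotone image of an arc is again an arc — the quotient of $[0,1]$ collapsing a disjoint family of closed subintervals is homeomorphic to $[0,1]$ unless everything collapses to a point — it follows that $\sigma(F)$ is an arc, a genuine nondegenerate one precisely when $\sigma(0)\neq\sigma(1)$, which proves \ref{hiv}. I expect the main obstacle to be this last passage, namely extracting arc-ness from minimality: the delicate points are the bookkeeping that makes the surgery $F\setminus(s,t)$ legitimate (resolved by the normalisation $\sigma(\min F)=\sigma(0)$, $\sigma(\max F)=\sigma(1)$) and the preservation of \ref{hii} along chains via the Hausdorff limiting argument.
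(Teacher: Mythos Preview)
The paper does not prove this lemma; it is quoted verbatim from the textbook \cite{ElemTop} and used as a black box in the proof of \Lr{oriarc}. So there is no ``paper's own proof'' to compare against.

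Your argument is correct and is essentially the classical one (Zorn's lemma to get a minimal $F$, then exploit minimality to show fibres of the extended map are intervals, hence the image is a monotone quotient of $[0,1]$ and therefore an arc). The delicate points you flag --- preservation of \ref{hii} under intersections of chains, and protecting \ref{hi} during the surgery $F\mapsto F\setminus(s,t)$ via the normalisation $\sigma(\min F)=\sigma(0)$, $\sigma(\max F)=\sigma(1)$ --- are handled correctly. One small remark: as you note at the end, when $\sigma(0)=\sigma(1)$ the minimal $F$ reduces to one or two points and $\sigma(F)$ degenerates to a point rather than an arc; this is harmless here since the only consumer of the lemma, \Lr{oriarc}, explicitly assumes $\sigma(0)\neq\sigma(1)$.
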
 

We now proceed with the proof of \Lr{oriarc}.

\begin{proof}
Apply \Lr{hall} to \sig\ to obtain a subset $F$ of $\unin$ with properties~\ref{hi}-\ref{hiv}. By~\ref{hiv} $A:=\sig(F)$ is an arc, so let $b: A \to \unin$ be a homeomorphism. Define the metric $d_A$ on $A$ by $d_A(x,y):= |b(x)-b(y)|$; clearly, $d_A$ is compatible with the topology of $A$. 

We now define a mapping $\sig': \unin \to X$ that will be an intermediate step towards the construction of $\tau$. For every point $x\in F$ let $\sig'(x)= \sig(x)$. For every component $C$ of $\unin\sm F$, recall that by \ref{hii} $\sig(x)=\sig(y)=p$ where $x,y$ are the end points of $C$, and let $\sig(z):=p$ for every $z\in C$. Obviously, $\sig'$ is a topological path. Moreover, it is easy to see that $\sig'$ shortcuts \sig; indeed, just define $m:\unin\to \unin$ to map any point $x\in F$ to itself and any point $z\not\in F$ to the endpoint of the component of $\unin\sm F$ in which it lies.

Note that $\sig'$ is \defi{almost injective}, that is, the preimage of any point is an interval of $\unin$. This implies that the length $h$ of $\sig'$ with respect to $d_A$ is finite, since, easily, any almost injective path in $\unin$ has finite length. Thus we may define a mapping $\tau: \unin \to \sig(F)$ by mapping any point $x\in \unin$ to a point $\sig'(y)$ such that the restriction $\sig' \restr [0,y]$ of $\sig'$ to $[0,y]$ has length $x/h$ (with respect to $d_A$). Note that by the definition of the length of a topological path $\tau$ is well defined, i.e.\ if $y,y'\in \unin$ are such that $\elll(\sig' \restr [0,y])=\elll(\sig' \restr [0,y'])$ then $\sig'(y)=\sig'(y')$. It is straightforward to check that $\tau$ shortcuts $\sig'$, and thus $\tau$ also shortcuts $\sig$ since $\sig'$ shortcuts $\sig$. Note that $\tau(0)=u$ and $\tau(1)=v$. 

It is also easy to prove that $\tau$ is injective; for if $x\leq y \in \unin$ then $\sig'$ contains a topological path $\sig''$ from $\tau(x)$ to $\tau(y)$ such that $\elll(\sig'')=(y-x)/h>0$, and as $\sig''$ is almost transitive this means that its endpoints $\tau(x),\tau(y)$ are distinct.

We claim next that $\tau$ is continuous. For this, pick a point $x\in\unin$ and let $O$ be an open ball of radius $\eps\in \R^+$ with respect to $d_A$ around $\tau(x)$. We have to show that $\unin$ has an open set $U \ni x$ such that $\tau(U)\subseteq O$. But this is easy: let $U:= [x-\frac{\eps}{h}, x+\frac{\eps}{h}]$. For every $y\in U$ there is a topological  $\tau(x)$--$\tau(y)$~path $\sig''$ contained in $\sig'$  such that $\elll(\sig'')=(y-x)/h$. As the length of any topological path is, by definition, at least the distance of its endvertices, we have $d_A(\tau(x),\tau(y))\leq (y-x)/h$, and thus $\tau(y) \in O$. Since $y$ was arbitrary, we have $\tau(U)\subseteq O$ as required. This proves that $\tau$ is continuous. 
\end{proof}

\section{Existence} \label{secExist}

The existence of a flow of finite energy satisfying \ksl\ in a \lf\ network is a well-known fact, and there are several standard techniques to prove it. Here we will see two such techniques, and point out that they can also be employed when looking for a \cutr\ flow with the above properties. Both these techniques start by showing that in any network $N=(G,r,p,q,I)$, where \g is \lf, \ti\ a \flo{p}{q} $i$ of intensity $I$ that has minimum energy among all such flows; it is then an easy step to show that $i$ satisfies \ksl: if $C$ were a directed cycle with  $\sum_{\are\in \arE(C)} v(\are) >0$, then subtracting from $i$ a constant circular flow around $C$ of sufficiently small intensity yields a flow $i'$ with $W(i')<W(i)$, a contradiction (note that $W(i)<\infty$ since there are flows of finite energy in $N$: just pick a \pth{p}{q}\ $P$ and send a constant flow of intensity $I$ along $P$).

Thus the interesting part is to show the existence of a (\cutr) flow of minimum energy. Our first technique does so using the following well-known fact.

\begin{lemma}[{\cite[Theorem~4.10]{rudin}}]\label{hilb}
If $C$ is a non-empty, closed, convex subset of a Hilbert space, then \ti\ a unique point $y\in C$ of minimum norm among all elements of $C$.
\end{lemma}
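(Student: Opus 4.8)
The plan is to run the classical minimizing-sequence argument, whose only two ingredients are the parallelogram law (valid in any inner-product space) together with the convexity and closedness of $C$. Set $\delta := \inf_{x\in C}\|x\|$, a well-defined non-negative real since $C$ is non-empty, and choose a sequence $(x_n)$ in $C$ with $\|x_n\|\to\delta$.

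First I would show that $(x_n)$ is a Cauchy sequence. Applying the parallelogram law to $x_n$ and $x_m$ yields
\[
\|x_n - x_m\|^2 \;=\; 2\|x_n\|^2 + 2\|x_m\|^2 - 4\left\|\tfrac{x_n+x_m}{2}\right\|^2 .
\]
Since $C$ is convex, $\tfrac{x_n+x_m}{2}\in C$, so $\left\|\tfrac{x_n+x_m}{2}\right\|\ge\delta$, and hence $\|x_n-x_m\|^2 \le 2\|x_n\|^2 + 2\|x_m\|^2 - 4\delta^2$. As $\|x_n\|,\|x_m\|\to\delta$, the right-hand side tends to $0$, so $(x_n)$ is Cauchy. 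By completeness of the Hilbert space it converges to some point $y$; since $C$ is closed, $y\in C$, and by continuity of the norm $\|y\|=\lim_n\|x_n\|=\delta$. Thus $y$ is a point of $C$ of minimum norm.

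For uniqueness, suppose $y,y'\in C$ both have norm $\delta$. Then $\tfrac{y+y'}{2}\in C$ by convexity, so $\left\|\tfrac{y+y'}{2}\right\|\ge\delta$, and the parallelogram law gives $\|y-y'\|^2 = 2\delta^2 + 2\delta^2 - 4\left\|\tfrac{y+y'}{2}\right\|^2 \le 0$, whence $y=y'$.

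There is no genuine obstacle here: the point to keep track of is that convexity is used exactly so that the midpoints $\tfrac{x_n+x_m}{2}$ stay in $C$ (turning the parallelogram identity into the Cauchy estimate), while completeness is used exactly to produce the limit and closedness to keep it in $C$. Dropping any one of these hypotheses makes the conclusion fail, which is a useful sanity check on the argument.
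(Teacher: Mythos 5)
Your proof is correct and is exactly the classical parallelogram-law argument; the paper does not reprove this lemma but simply cites it as Theorem~4.10 of Rudin, whose proof is the same minimizing-sequence argument you give. Nothing to add.
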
 % *** ---- *** \begin{proof} 	\end{proof}
In order to apply it, let $H$ be the space of all functions $f: E(G) \to \R$ endowed with the inner product $\left<f,g\right>:= \sum_{e\in E(G)} f(e) r(e) g(e)$, and note that the corresponding norm is $||f|| = \sqrt{W(f)}$. It is straightforward to check that this is a Hilbert space, and that the subset $C$ consisting of \cutr\ flows is closed and convex. Thus \Lr{hilb} yields a \cutr\ flow of minimum energy.

Our second technique uses a standard compactness argument. Recall that there is at least one flow in $N$ with finite energy $W$. This $W$ yields, \fe\ edge $e$, an upper bound for the amount of flow $i(e)$ that a flow $i$ of minimum energy, if one exists, can ever send along $e$: this upper bound is $\hat{i}(e):=\sqrt{W/r(e)}$. Now consider \fe\ edge $e$ a topological space $X_e$ homeomorphic to the real interval $[-\hat{i}(e), \hat{i}(e)]$, and let $X:= \Pi_{e} X_e$ be the product of these spaces. Every (\cutr) flow $f$ in $N$ can be represented as a point $p_f$ in $X$: from each component $X_e$ choose the  point of $[-\hat{i}(e), \hat{i}(e)]$ corresponding to $f(e)$ (we are assuming a choice of an orientation for each edge). Let $\seq{f}$ be a sequence of \cutr\ flows in $N$ whose energies converge to the infimum $M$ of the energies of all such flows. By Tychonoff's theorem, $X$ is a compact space; thus, $\seq{f}$ has an accumulation point $w$ in $X$, and it is straightforward to check that the function $i: E(G) \to \R$ corresponding to $w$ is a flow and in fact a \cutr\ one. It is also not hard to see that $W(i)= M$.

We have thus given two proofs of the existence part of our main result:

\begin{theorem}\label{Texist}
Let $N=(G,r,p,q,I)$ be a \lf\ network. Then there is a unique \cutr\ flow $i$ of minimum energy in $N$. This flow satisfies \ksl.
\end{theorem}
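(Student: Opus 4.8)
The plan is to combine the two threads developed above: I would present the Hilbert-space argument (via \Lr{hilb}) in full, since it delivers existence and uniqueness simultaneously, and then obtain \ksl\ from the routine energy-perturbation argument already sketched.

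First I would set up the convex geometry. Let $H$ be the Hilbert space of finite-energy functions $f\colon E(G)\to\R$ with inner product $\langle f,g\rangle=\sum_e f(e)\,r(e)\,g(e)$, so that $\|f\|^2=W(f)$, and let $C\subseteq H$ be the set of \cutr\ \flos{p}{q} of intensity $I$. I would check the three hypotheses of \Lr{hilb}. $C$ is non-empty: sending a constant flow of intensity $I$ along a fixed \pth{p}{q}\ $P$ gives a finite-energy flow, and it is \cutr\ because $P$ meets every finite cut with $p,q$ on the same side in a net of zero edges. $C$ is convex: both \kfl\ with fixed intensity $I$ and the linear constraints $f(X,X')=0$ are preserved under convex combinations. $C$ is closed: convergence in $\|\cdot\|$ forces edgewise convergence, since $(f_n(e)-f(e))^2 r(e)\le\|f_n-f\|^2\to 0$ for each $e$, and then, $G$ being \lf, \kfl\ at each vertex and the constraint $f(X,X')=0$ for each finite cut $(X,X')$ are finite sums and hence pass to the limit. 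Now \Lr{hilb} yields a unique point $i\in C$ of minimum norm, that is, a unique \cutr\ \flo{p}{q} of intensity $I$ and minimum energy; in particular $W(i)<\infty$.

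Second I would verify that $i$ satisfies \ksl. If it did not, there would be a directed cycle $D$ with $\sum_{\are\in\arE(D)} v(\are)>0$, where $v(\are)=i(\are)r(\are)$. Subtract from $i$ a constant flow of small intensity $\eps>0$ circulating around $D$; the result $i_\eps$ is again a flow of intensity $I$ (a circulation around a finite cycle changes no node balance) and is still \cutr\ (a finite cycle meets every finite cut in a net of zero edges), while
\[
W(i_\eps)=W(i)-2\eps\sum_{\are\in\arE(D)} v(\are)+\eps^2\sum_{e\in D} r(e),
\]
so $W(i_\eps)<W(i)$ once $\eps$ is small enough, contradicting minimality of $W(i)$. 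Hence $i$ satisfies \ksl.

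I do not expect a genuine obstacle, since \Tr{Texist} only repackages the two existence arguments together with a standard perturbation; the single step that really uses the hypotheses is the closedness of $C$, which rests on local finiteness — needed both to pass \kfl\ to a limit and, together with finiteness of the cut, to pass the \cutr\ condition. Had one instead used the Tychonoff route for existence, uniqueness would still have to be argued separately, e.g.\ by strict convexity of $\|\cdot\|^2$: if $i,i'$ were two minimisers then $\tfrac12(i+i')\in C$ would have strictly smaller energy unless $i=i'$. Presenting the Hilbert-space formulation is therefore the cleanest path.
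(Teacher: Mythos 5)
Your proposal is correct and follows essentially the same route as the paper: the Hilbert-space argument via \Lr{hilb} applied to the closed convex set of \cutr\ flows of intensity $I$, followed by the standard energy-perturbation argument to deduce \ksl. You merely fill in the verifications (non-emptiness, convexity, closedness, and the explicit expansion of $W(i_\eps)$) that the paper declares straightforward, and your closing remark about the Tychonoff alternative matches the paper's own parenthetical note that uniqueness only follows from the Hilbert-space proof.
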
 
(The uniqueness of $i$ only follows from our first proof.)

Next, we show that the flow $i$ provided by \Tr{Texist} can be obtained as a limit of electrical currents in a sequence of finite networks converging to $N$. The aim of this fact is a better understanding of the concept of \cutr\ flows, but the reader may choose to skip to the next section as we will not make explicit use of this fact later.

Let $G_n, \nin$, be the (finite) subgraph of \g spanned by the vertices at distance at most $n$ from $p$, and let $\dbst{G_n}$ be the graph obtained from $G$ by contracting each component $K$ of $G - G_n$ into a vertex $v_K$ (keeping multiple edges that may result from these contractions). We will, with a slight abuse, use $G_n$ and $\dbst{G_n}$ to also denote the corresponding networks, not just the graphs. For a finite network $H$ we denote by \defi{i(H)} the unique electrical current in $H$. 

\begin{proposition}
Let $N$ be \lf\ network, and let $i$ be the \cutr\ flow in $N$ with minimum energy. Then $i= \lim_n i(\dbst{G_n})$; in particular, the latter limit exists.
\end{proposition}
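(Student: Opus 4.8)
The plan is to show that the finite-network currents $i(\dbst{G_n})$ converge coordinatewise to a limit that is a \cutr\ flow of minimum energy, and then invoke the uniqueness clause of \Tr{Texist} to identify this limit with $i$. I regard each $i(\dbst{G_n})$ as a function on $E(G)$ by setting it to zero on every edge not present in $\dbst{G_n}$ (i.e.\ on edges inside a contracted component $K$ of $G-G_n$); note that the edges of $\dbst{G_n}$ are exactly the edges of $G$ with at least one endpoint in $G_n$, so every fixed edge eventually lies in all $\dbst{G_m}$.

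First I would establish a uniform energy bound. Take any finite-energy \flo{p}{q}\ $f$ of intensity $I$ in $N$ (for instance the constant flow of intensity $I$ along a fixed \pth{p}{q}). The projection of $f$ to $\dbst{G_n}$ --- obtained by summing, for each contracted vertex, the flows across the boundary, which is consistent by \kfl --- is a \flo{p}{q}\ of intensity $I$ in $\dbst{G_n}$ whose energy is at most $W(f)$ (merging parallel edges and deleting the interior edges of the components $K$ can only decrease energy, using $r\ge 0$). Since $i(\dbst{G_n})$ is the minimum-energy flow in $\dbst{G_n}$ by \Tr{exist}, we get $W(i(\dbst{G_n}))\le W(f)=:W_0$ for all $n$. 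Hence $|i(\dbst{G_n})(e)|\le\sqrt{W_0/r(e)}$ for every edge $e$, so the sequence lives in the compact product space $X=\Pi_e\,[-\sqrt{W_0/r(e)},\sqrt{W_0/r(e)}]$ exactly as in the second existence proof. Therefore some subsequence converges pointwise to a function $j:E(G)\to\R$, and by Fatou/the weak-lower-semicontinuity of energy (or simply by taking limits of finite partial sums) $W(j)\le W_0<\infty$.

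Next I would check that every such subsequential limit $j$ is a \cutr\ flow of intensity $I$. \kfl\ at a fixed vertex $x$ involves only the finitely many edges at $x$, and for $n$ large enough $x\in G_n$ so that these edges and the law $(K1)$ at $x$ are literally the same in $\dbst{G_n}$ as in $N$; passing to the limit gives $(K1)$ for $j$ at $x$, and likewise the intensity is $I$. For the \cutr\ condition, let $(X,X')$ be a finite cut of $G$ with $p,q\in X$; for $n$ large the finite edge set $E(X,X')$ lies entirely in $\dbst{G_n}$, and in $\dbst{G_n}$ the cut $(X,X')$ is \emph{any} cut while $i(\dbst{G_n})$ is a flow, so by \kfl\ summed over the side not containing $p,q$ we get $i(\dbst{G_n})(X,X')=0$; the limit gives $j(X,X')=0$. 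Thus $j$ is a \cutr\ flow of intensity $I$ with finite energy, and $W(j)\le W_0$; in fact, taking $f$ to range over minimizers shows $W(j)\le M$ where $M$ is the minimum \cutr\ energy, and since $j$ itself is \cutr\ we get $W(j)=M$. By the uniqueness in \Tr{Texist}, $j=i$. Since every subsequential limit of the precompact sequence $(i(\dbst{G_n}))$ equals the single point $i$, the full sequence converges pointwise to $i$, which is the assertion.

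The main obstacle I expect is the ``projection'' step --- verifying cleanly that contracting a component $K$ of $G-G_n$ into a vertex and merging parallel edges sends a finite-energy \flo{p}{q}\ in $N$ to a genuine \flo{p}{q}\ of intensity $I$ in $\dbst{G_n}$ with no larger energy. The node law at the contracted vertex $v_K$ is the sum of the node laws over all vertices of $K$ (which are all interior, so the right-hand sides vanish), so it holds; the subtlety is just bookkeeping with multiple edges and the convexity inequality $(a+b)^2 r\le a^2 r' + b^2 r''$ type estimates when $r\le\min(r',r'')$, which is where finiteness of resistances --- rather than of total resistance --- is what is actually used here. Everything else is the same compactness-plus-uniqueness packaging already used twice in \Sr{secExist}.
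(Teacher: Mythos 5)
Your overall architecture is the same as the paper's: obtain a uniform energy bound for the currents $i(\dbst{G_n})$ by projecting a suitable flow of $N$ onto $\dbst{G_n}$, extract an accumulation point $j$ in the compact product space $X$, verify that $j$ is a \cutr\ flow of intensity $I$ with $W(j)\le M$, and identify $j=i$ via the uniqueness clause of \Tr{Texist}. The paper is slightly more economical: it projects $i$ itself from the outset (the projection is literally the restriction of $i$ to $E(\dbst{G_n})$, since $\dbst{G_n}$ keeps multiple edges --- no merging of parallel edges and no convexity inequality are needed), which gives $W(i(\dbst{G_n}))\le W(\dbst{i_n})\le W(i)$ in one line and makes your detour through an auxiliary path flow unnecessary.

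There is, however, one genuine soft spot: your justification that the projected flow satisfies the node law at a contracted vertex $v_K$, namely that it is ``the sum of the node laws over all vertices of $K$''. When the component $K$ of $G-G_n$ is infinite, this is a rearrangement of an infinite sum and is not valid; indeed the conclusion is \emph{false} for a general flow satisfying \eqref{kI}: the pathological flow of \fig{draynet} satisfies \eqref{kI} at every vertex yet has net flow $I\neq 0$ out of the infinite component to the left of $p$. This is exactly the ``escape to infinity'' that non-elusiveness forbids. The correct justification is that $E(K,V(G)\sm K)$ is a \emph{finite} cut (all edges leaving $K$ end in the finite graph $G_n$) not separating $p$ from $q$ once $p,q\in G_n$, so the node law at $v_K$ is precisely the non-elusiveness condition for that cut --- this is the point of the hypothesis that $i$ is \cutr, and of working with $\dbst{G_n}$ rather than $G_n$. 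Your two concrete instances happen to be safe (the finitely supported path flow, and the minimizer $i$, which is \cutr\ by hypothesis), but the reason you give for the key step is the wrong one and should be replaced by an appeal to non-elusiveness, as in the paper.
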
 
% *** ---- *** 
\begin{proof}
Define $\dbst{i_n}: E(\dbst{G_n}) \to \R$ by $ \dbst{i_n}(e) = i(e)$ \fe\ $n$; since $i$ is \cutr, it follows that $\dbst{i_n}$ satisfies \kfl\ for every contracted vertex. Thus $\dbst{i_n}$ is a \flo{p}{q} of value $I$ in $\dbst{G_n}$, and thus $W(i(\dbst{G_n})) \leq W(\dbst{i_n}) \leq W(i)$ for every $n$.
 
Let $g$ be an accumulation point of the sequence $(i(\dbst{G_n}))_\nin$ in the product space $X$ defined as above; here we have to check whether the components $X_e$ are large enough that $X$ can accommodate the elements of the latter sequence, but this is indeed the case since we have shown that $W(i(\dbst{G_n}))\leq W(i)$ \fe\ $n$. The latter inequality also implies 
\labtequ{wgf}{$W(g)\leq W(i)$.}

As in our compactness proof of \Tr{Texist} it is easy to prove that $g$ is a flow in $N$ satisfying \ksl. We moreover claim that $g$ is \cutr. Indeed, for every finite cut $(X,X')$ that does not separate $p$ from $q$ there is an $\nin$ \st\ every edge in $(X,X')$ is contained in $\dbst{G_m}$ \fe\ $m>n$. Since every flow in a finite network is \cutr, $g$ is an accumulation point of flows $i_n$ satisfying $i_n(X,X')=0$; thus $g(X,X')=0$ holds as desired. By \Tr{Texist} and~\eqref{wgf} we obtain $g=i$. In particular, $g=i$ is the only accumulation point of the sequence $(i(\dbst{G_n}))_\nin$, and we may write $i= \lim_n i(\dbst{G_n})$.
\end{proof}

\note{WW thinks that this is true and known for * instead of ** without non-elusiveness}

The last result motivates the following problem.

\begin{conjecture}
If $\lim i(G_n) = \lim i(\dbst{G_n})$ then there is a unique flow in $N$ satisfying \ksl.
\end{conjecture}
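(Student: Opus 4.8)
I would argue inside the Hilbert space $H$ of Section~\ref{secExist}: real functions on $E(G)$ (each edge given a reference orientation) with $\langle f,g\rangle=\sum_{e}f(e)g(e)r(e)$ and $\|f\|^2=W(f)$. Local finiteness makes the node-law functional $f\mapsto\sum_{xy\in E}f(x,y)$ at each vertex $\|\cdot\|$-continuous, so the space $Z\le H$ of finite-energy circulations, its subspace $Z'\le Z$ of finite-energy \cutr\ circulations, and the closure $\overline\diamondsuit$ of the span $\diamondsuit$ of the finite-cycle vectors $\chi_C$ are all closed, with $\overline\diamondsuit\subseteq Z'$ (a finite cycle meets every finite cut evenly, so $\chi_C\in Z'$). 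Ohm's law gives $\langle f,\chi_C\rangle=\sum_{\are\in\arE(C)}f(\are)r(\are)$, so a flow obeys \eqref{KII} exactly when it is orthogonal to $\overline\diamondsuit$. The first task is to read the two limits variationally. Fix the \cutr\ unit-$I$ flow $\theta$ along a finite \pth{p}{q}. Then the intensity-$I$ flows supported on $E(G_n)$ form $\theta+(\text{finitely supported circulations})$, and these are dense, as $n\to\infty$, in the closed affine set $\theta+\overline\diamondsuit$; since $i(G_n)$ minimises $W$ among intensity-$I$ flows in $G_n$ (\Tr{exist}), a compactness argument as in Section~\ref{secExist} shows that $\lim_n i(G_n)$ exists and equals the minimiser of $\theta+\overline\diamondsuit$. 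By the Proposition, $\lim_n i(\dbst{G_n})$ equals $i^W$, the minimiser of $\theta+Z'$. As $\theta+\overline\diamondsuit\subseteq\theta+Z'$, the minimiser of the larger set equals that of the smaller set iff it already lies in the smaller one; thus the conjecture's hypothesis is \emph{equivalent} to $i^W-\theta\in\overline\diamondsuit$ (independent of the choice of $\theta$, since two reference flows differ by an element of $\diamondsuit$).

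\textbf{Reducing the conclusion.} I would read ``unique flow satisfying \eqref{KII}'' in the sense of \Tr{finr}, i.e.\ a unique \cutr\ finite-energy \flo{p}{q} of intensity $I$ obeying \eqref{KII}; without that restriction the statement fails already for a lone double ray, where $\lim i(G_n)=\lim i(\dbst{G_n})$ is the flow along the $p$--$q$ segment yet the flow escaping to one end and re-entering from the other also obeys the (here vacuous) \eqref{KII}. In this reading the conclusion is precisely $Z'=\overline\diamondsuit$: two such flows differ by some $w\in Z'$ with $w\perp\diamondsuit$, which is $0$ when $Z'=\overline\diamondsuit$, while any nonzero $w\in Z'\cap\diamondsuit^{\perp}$ makes $\{i^W+tw:t\in\R\}$ an infinite family of such flows (here $i^W$ is itself \cutr\ and obeys \eqref{KII} by \Tr{Texist}). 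Note that for $\sum_e r(e)<\infty$ one has $Z'=\overline\diamondsuit$ unconditionally by \Tr{finr}, so the conjecture carries content only when the total resistance is infinite. It remains to prove $i^W-\theta\in\overline\diamondsuit\ \Rightarrow\ Z'=\overline\diamondsuit$.

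\textbf{The engine, and the main obstacle.} For this implication I would reuse the circle-splitting of \Sr{sint}: given a nonzero $w\in Z'$, the intermediate fact advertised in the introduction (a \cutr\ circulation traverses some, possibly infinite, \tocir\ in \eti\ with a definite orientation) provides a circle $C$ with $E(C)\subseteq\{e:w(e)\ne0\}$, and \Lr{sparse} writes $\arE(C)=\sum_n\arE(D_n)$ for a sparse family of finite directed cycles; sparseness bounds the energy of the partial sums $\sum_{n\le N}\chi_{D_n}$, and one would want the hypothesis to force these to be $\|\cdot\|$-Cauchy, so that $\chi_C\in\overline\diamondsuit$ and, peeling further circles off $w$, $w\in\overline\diamondsuit$. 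The main obstacle is exactly this last step: in $H$-terms the hypothesis is the single statement $i^W-\theta\in\overline\diamondsuit$ about one particular flow, whereas $Z'=\overline\diamondsuit$ quantifies over \emph{all} \cutr\ circulations, and the obvious move --- perturbing $i^W$ by a putative witness $w\in Z'\cap\diamondsuit^{\perp}$ --- only yields a \cutr\ flow of strictly larger energy, hence no contradiction; transferring information from $i^W$ to an arbitrary $w$ seems to need a new idea, perhaps comparing $i(G_n)$ with $i(\dbst{G_n})$ edge-by-edge rather than only through their variational characterisations. A secondary, technical difficulty is that when $\sum_e r(e)=\infty$ a \tocir\ in \eti\ need not have finite energy, so \Lr{sparse} and \Lr{lcisle} cannot be applied to \eti\ directly; one should instead pass to (or truncate to) the subnetwork carrying $w$ and check that a \cutr\ circulation still traverses a finite-energy circle there.
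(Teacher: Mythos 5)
There is nothing in the paper to compare your attempt against: the statement is posed as an open \emph{conjecture} (``the last result motivates the following problem''), and the author supplies no proof. Judged on its own, your write-up is a correct and genuinely clarifying reformulation, but not a proof, and you say as much yourself. The sound part: identifying $\lim i(G_n)$ with the energy-minimiser over $\theta+\overline{\diamondsuit}$ and, via the paper's Proposition, $\lim i(\dbst{G_n})$ with the minimiser over the non-elusive flows $\theta+Z'$; observing that $\overline{\diamondsuit}\subseteq Z'$ are closed subspaces and that satisfying \ksl\ means being orthogonal to $\diamondsuit$; concluding that the hypothesis is equivalent to $i^W-\theta\in\overline{\diamondsuit}$ and the (suitably read) conclusion to $Z'=\overline{\diamondsuit}$. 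Your remark that the conjecture is false under a literal reading of ``flow'' (the double ray already defeats it, even among finite-energy flows, so ``\cutr'' and ``finite energy'' must be read in) is a worthwhile observation that the paper leaves implicit.

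The genuine gap is exactly where you locate it, and it is the entire content of the conjecture: the hypothesis is one membership statement about the single flow $i^W$ for the fixed triple $(p,q,I)$, whereas the conclusion quantifies over every finite-energy \cutr\ circulation $w$, and nothing in your setup transfers information from $i^W$ to an arbitrary $w\in Z'\cap\diamondsuit^{\perp}$. Your proposed engine also does not run as stated: \Lr{sparse} and the circle-extraction of \Tr{finr} produce a circle $C$ in $\eti$ on which $w$ is positively oriented, but turning $\sum_n\chi_{D_n}$ into a norm-convergent series in $H$ (so that $\chi_C\in\overline{\diamondsuit}$) is precisely what \Cr{tinfkir} needed $\sum_e r(e)<\infty$ for, and that hypothesis is unavailable here --- indeed when it \emph{is} available the conjecture is vacuous because \Tr{finr} already gives $Z'=\overline{\diamondsuit}$ unconditionally. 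So the proposal is a correct translation of the conjecture into Hilbert-space language plus an honest account of why the translation alone does not close it; it should not be presented as a proof.
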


\section{The main result: {U}niqueness} \label{suniq}

As mentioned in the introduction, in order to prove the uniqueness of a \cutr\ flow in $N$ satisfying \ksl, we will consider the difference $z$ of two distinct hypothetical flows of this kind; we will not be able to find a cycle in $z$ as in the finite case, but instead we will be able to find a circle. In order to obtain a contradiction, we then need to show that circles must satisfy \eqref{KII} as well. This is an immediate corollary of \Lr{sparse} and the following easy property of real vectors.

\begin{lemma}\label{vectors} 
Let $R,I$ be two infinite-dimensional vectors with positive real values. If $\left<I,R\right> > \left<\unit,R\right>$ then $\left<I^2,R\right> > \left<I,R\right>$. 
\end{lemma}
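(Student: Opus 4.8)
The plan is a short algebraic computation. Write $I=(i_k)_k$ and $R=(r_k)_k$, all entries positive, so that $\langle\unit,R\rangle=\sum_k r_k$, $\langle I,R\rangle=\sum_k i_k r_k$ and $\langle I^2,R\rangle=\sum_k i_k^2 r_k$. I would first note that the hypothesis $\langle I,R\rangle>\langle\unit,R\rangle$ forces $\langle\unit,R\rangle<\infty$, and that in the situation where the lemma is applied $\langle I,R\rangle$ and $\langle I^2,R\rangle$ are finite as well (the latter being bounded by the energy of a finite-energy flow, the former then by $\tfrac12(\langle I^2,R\rangle+\langle\unit,R\rangle)$ since $2i_k\le i_k^2+1$). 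So we may work with genuinely, indeed absolutely, convergent series throughout.

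The heart of the matter is the pointwise identity $i_k^2-i_k=(i_k-1)+(i_k-1)^2$, valid for every $k$. Multiplying by $r_k$ and summing over $k$ — legitimate since $\sum_k r_k$, $\sum_k i_k r_k$ and $\sum_k i_k^2 r_k$ all converge — yields
\begin{align*}
\langle I^2,R\rangle-\langle I,R\rangle
&=\sum_k (i_k^2-i_k)\,r_k=\sum_k (i_k-1)\,r_k+\sum_k (i_k-1)^2\,r_k\\
&=\bigl(\langle I,R\rangle-\langle\unit,R\rangle\bigr)+\langle (I-\unit)^2,R\rangle .
\end{align*}
The first term on the right-hand side is strictly positive by hypothesis, and the second is a sum of nonnegative terms $(i_k-1)^2 r_k\ge 0$. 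Hence $\langle I^2,R\rangle-\langle I,R\rangle>0$, which is exactly the assertion.

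I do not anticipate any genuine obstacle; the only point that needs a word of care is the rearrangement of series in the displayed line, and this is harmless because finiteness of $\sum_k r_k$, $\sum_k i_k r_k$ and $\sum_k i_k^2 r_k$ makes $\sum_k(i_k-1)^2 r_k=\sum_k(i_k^2-2i_k+1)r_k$ finite too, so every step above is a manipulation of convergent series. (If one prefers not to assume finiteness of $\langle I,R\rangle$ and $\langle I^2,R\rangle$ at the outset, one can instead read the displayed computation as an identity in $[0,\infty]$ after transposing the subtracted terms, using only positivity of the $r_k$.)
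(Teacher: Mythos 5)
Your proof is correct and is essentially the paper's argument in explicit algebraic form: the paper's componentwise ``gain/loss'' comparison amounts precisely to the pointwise inequality $i_k^2 r_k - i_k r_k \ge i_k r_k - r_k$, i.e.\ $(i_k-1)^2 r_k \ge 0$, which is your displayed identity summed over $k$. Your added remarks on convergence only make explicit what the paper leaves to the reader.
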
 
(where $\left<\ ,\ \right>$ denotes the usual inner product, $\unit$ denotes the all-ones vector  and $I^2$ denotes the vector whose value at each coordinate is the square of the value of $I$ at that coordinate).
% *** ---- *** 
\begin{proof}
For every component $j$, if there is a gain in this component when going from $\left<\unit,R\right>$ to $\left<I,R\right>$, that is, if $I(j)R(j) \geq R(j)$, then this gain becomes even bigger when going from $\left<I,R\right> $ to $\left<I^2,R\right>$. On the other hand, if there is a loss  in this component when going from $\left<\unit,R\right>$ to $\left<I,R\right>$, then the loss becomes smaller when going from $\left<I,R\right>$ to $\left<I^2,R\right>$, since $I(j)R(j) < R(j)$ in that case. The assertion follows easily from these two observations.
\end{proof}

\begin{corollary}\label{tinfkir}	
Let $N$ be a \lf\ electrical network with resistances $r: E(G) \to \R^+_*$ \st\ $\sum_{e\in E(G)} r(e)<\infty$. Let $i$ be a flow satisfying \ksl\ and $W(i)<\infty$. Then \fe\ directed circle $C$ in \fcg\ \tho\ $\sum_{\are\in \arE(C)} v(\are) =0$; in particular, the latter sum is well-defined.
\end{corollary}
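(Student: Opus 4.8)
\emph{Plan.} The corollary is meant to follow quickly from \Lr{sparse} and \Lr{vectors}, so I would set things up to make that combination clean. Write $v(\are):=i(\are)r(\are)$ as in the statement, fix a reference orientation $\are$ of each edge $e$, and for a finite directed cycle or a directed circle $X$ let $m_X(e)\in\Z$ be the coefficient of $\are$ in $\arE(X)$, so that $\sum_{\are\in\arE(X)}v(\are)=\sum_e m_X(e)v(\are)$; call this quantity $\sigma(X)$. We may assume $G$ is connected, hence countable since it is \lf, because $C$ lies within the closure of a single component of $G$ and passing to that component changes neither $E(C)$, nor the validity of \ksl\ on its finite cycles, nor the finiteness of the energy. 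If $E(G)$ is finite then $C$ is a finite cycle and the claim is immediate from \ksl; so assume $E(G)$ is countably infinite.

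First I would prove absolute convergence. Let $E_0:=\{e\in E(G):i(e)\neq0\}$; if $E_0$ is finite there is nothing to show, so suppose it is infinite and apply \Lr{vectors} to the positive infinite-dimensional vectors $R:=(r(e))_{e\in E_0}$ and $I:=(|i(e)|)_{e\in E_0}$. Here $\left<\unit,R\right>=\sum_{e\in E_0}r(e)\le\sum_{e\in E(G)}r(e)<\infty$ and $\left<I^2,R\right>=\sum_{e\in E_0}i(e)^2r(e)=W(i)<\infty$. If $\left<I,R\right>\le\left<\unit,R\right>$ then $\left<I,R\right><\infty$ directly; otherwise \Lr{vectors} forces $\left<I,R\right><\left<I^2,R\right><\infty$. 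Either way $\sum_{e\in E(G)}|v(e)|=\left<I,R\right><\infty$. In particular $\sum_{\are\in\arE(C)}v(\are)$ converges absolutely, hence is well-defined, which already takes care of the last sentence of the corollary.

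Next I would invoke \Lr{sparse} (legitimate since $G$ is countable) to get a sparse family $\{D_n\}_{n\in\N}$ of finite directed cycles with $\arE(C)=\sum_n\arE(D_n)$, so that $m_C(e)=\sum_n m_{D_n}(e)$ for each $e$, a sum with at most three nonzero terms by sparseness and with each $m_{D_n}(e)\in\{-1,0,1\}$. Since $i$ satisfies \ksl\ we have $\sigma(D_n)=\sum_e m_{D_n}(e)v(\are)=0$ for every $n$, hence $0=\sum_n\sigma(D_n)=\sum_n\sum_e m_{D_n}(e)v(\are)$. Now I would interchange the two summations; this is valid because $\sum_n\sum_e|m_{D_n}(e)v(\are)|=\sum_e|v(\are)|\sum_n|m_{D_n}(e)|\le3\sum_{e\in E(G)}|v(e)|<\infty$ by sparseness and the previous paragraph. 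Using $\sum_n m_{D_n}(e)=m_C(e)$ this yields $0=\sum_e m_C(e)v(\are)=\sum_{\are\in\arE(C)}v(\are)$, as desired.

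The only genuine point — and the sole reason \Lr{vectors} is needed here — is this rearrangement: the equalities $\sigma(D_n)=0$ are trivial and \Lr{sparse} does the structural work, but an infinite circle $C$ may run along a double ray "inward" and "outward" in a way that a priori makes $\sum_{\are\in\arE(C)}v(\are)$ only conditionally convergent, so termwise cancellation over the $D_n$ would be meaningless. \Lr{vectors} is exactly the device that promotes the finiteness of $W(i)$ and of $\sum_e r(e)$ to the absolute bound $\sum_e|v(e)|<\infty$; everything else is bookkeeping.
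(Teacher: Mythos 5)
Your proposal is correct and follows essentially the same route as the paper: decompose $C$ via \Lr{sparse} into a sparse family of finite directed cycles each summing to zero by \ksl, use \Lr{vectors} to upgrade $\sum r(e)<\infty$ and $W(i)<\infty$ to $\sum_e|v(e)|<\infty$, and then justify the rearrangement by the factor-of-three bound coming from sparseness. The only differences are that you spell out the case analysis hidden in the application of \Lr{vectors} and the reduction to a countable graph, both of which the paper leaves implicit.
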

% *** ---- *** 
\begin{proof}
\Lr{sparse} yields a sparse family \ffml{D}{\N}\ of finite directed cycles $D_n$ in \g \st\ $\arE(C) = \sum_n \arE(D_n)$. By \eqref{KII} we have $\sum_{\are \in \arE(D_n)} v(\are)=0$ \fe\ $n$. Thus it suffices to prove that the sum $\sum_n \sum_{\are \in \arE(D_n)} v(\are)$ is absolutely convergent. To prove this, recall that we are assuming that $\sum_{e\in E(G)} r(e)<\infty$ and $\sum_{e\in E(G)} i^2(e) r(e)<\infty$. It follows with \Lr{vectors} that $\sum_{e\in E(G)} |i(e)| r(e)=: u <\infty$. Since the family $D_n$ is sparse, this means that $\sum_n \sum{\are \in \arE(D_n)} |v(\are)| \leq 3 u < \infty$, which proves that the sum is absolutely convergent as desired. Thus \\ $\sum_{\are\in \arE(C)} v(\are) = \sum_n 0 = 0$.
\end{proof}

The following lemma states that any infinite sequence of topological paths between two fixed vertices $x,y$ in $|G|$ has a ``converging'' subsequence whose limit contains a topological $x$-$y$~path. More precisely, given a sequence $E_1,E_2,\ldots$ of sets, let us write 
$$\limf{E_n} := \bigcup_{i\in \N} \bigcap_{j>i} E_j$$ 
for the set of elements eventually in $E_n$.

\begin{lemma}[\cite{hp}] \label{hp}
Let $G$ be a locally finite graph, let $x,y \in V(G)$  and let $(\tau_n)_{n\in \N}$ be a sequence of topological $x$--$y$~paths in $|G|$. Then, there is an infinite subsequence $(\tau_{a_n})_{n\in \N}$ of $(\tau_n)$ and a topological  $x$--$y$~path $\sigma$ in $|G|$ such that $E(\sigma) \subseteq \limf{E(\tau_{a_n})}$. 
Moreover, if no $\tau_n$ traverses any edge more than once then $E(G) \setminus E(\sigma) \subseteq \limf{E(G) \setminus E(\tau_{a_n})}$, no edge is traversed by $\sigma$ more than once, and for every finite subset $F$ of $E(\sigma)$ there is an $m\in\N$ such that the linear ordering of $F$ induced by $\sigma$ coincides with that induced by $\tau_{a_n}$ for every $a_n>m$.
\end{lemma}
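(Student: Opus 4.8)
The plan is to build the subsequence and the limit path $\sigma$ by a diagonal argument driven by the enumeration $e_0,e_1,\dots$ of $E(G)$ together with the compactness of $|G|$. First I would exploit the fact that $G$ is locally finite, so $|G|$ is a compact metrizable space; fix a compatible metric, e.g.\ the one coming from an assignment of lengths $\ell$ with $\sum_e \ell(e)<\infty$, so that by Theorem~\ref{finl} we may work with $\ltp$ in place of $|G|$. Each $\tau_n:[0,1]\to |G|$ may, after a harmless reparametrisation, be taken to be $1$-Lipschitz with respect to arc length along its image (when it does not repeat edges this is exactly parametrisation by normalised length, using Lemma~\ref{lcisle} to control total length by $\sum_{e\in E(\tau_n)}\ell(e)\le \sum_e\ell(e)<\infty$; in the general case one reparametrises so that constant stretches absorb the excess). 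Then $\{\tau_n\}$ is an equicontinuous, pointwise-bounded family of maps into a compact space, so by the Arzel\`a--Ascoli theorem there is a subsequence $(\tau_{a_n})$ converging uniformly to a continuous map $\sigma:[0,1]\to|G|$ with $\sigma(0)=x$, $\sigma(1)=y$. This $\sigma$ is the desired topological $x$--$y$ path.

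Next I would verify the edge-containment statements. If an edge $e=uv$ lies in the image of $\sigma$, then $\sigma$ must actually traverse $e$ fully: a continuous path entering the interior of an edge can only leave it through an endpoint, so if $\sigma$ reaches an interior point of $e$ it contains a whole subinterval mapped into $e$; picking a point $\sigma(t)$ in the open middle third of $e$ and using uniform convergence, $\tau_{a_n}(t)$ is in $e$ for all large $n$, and since the $\tau_{a_n}$ are paths in $|G|$ they too must traverse $e$ near that parameter — hence $e\in\bigcap_{n\ge N}E(\tau_{a_n})$, giving $E(\sigma)\subseteq\limf{E(\tau_{a_n})}$. For the "moreover" part, assume no $\tau_n$ repeats an edge. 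Parametrising each $\tau_{a_n}$ by normalised arc length makes it $1$-Lipschitz, and the set of parameters spent on any fixed edge $e$ is an interval of length $\ell(e)/L_n$ where $L_n=\ell(\tau_{a_n})$; passing to a further subsequence we may assume $L_n\to L$ and that for each fixed $k$ the pattern of which of $e_0,\dots,e_k$ are traversed, in what order and direction and at roughly which parameters, stabilises (finitely many edges, finitely many patterns, diagonalise). A standard lower-semicontinuity / "no edge can be created in the limit" argument then shows $\sigma$ traverses $e$ only if infinitely many $\tau_{a_n}$ do, i.e.\ $E(G)\setminus E(\sigma)\subseteq\limf{E(G)\setminus E(\tau_{a_n})}$; that $\sigma$ traverses each edge at most once follows because the total parameter-length available on $e$ in the limit is $\ell(e)/L\le\ell(e)$, exactly one crossing's worth, since each $\tau_{a_n}$ used it at most once. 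The order agreement on any finite $F\subseteq E(\sigma)$ is then immediate from the stabilisation of the finitely many crossing-parameters.

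The main obstacle, and where I would spend the most care, is the reparametrisation in the case where the $\tau_n$ \emph{may} repeat edges or backtrack: there is no uniform Lipschitz bound in terms of $\ell$, so Arzel\`a--Ascoli is not directly available. The remedy I would use is to first apply the shortcutting result, Lemma~\ref{oriarc} (or rather the structural Lemma~\ref{hall} behind it), to replace each $\tau_n$ by an \emph{almost injective} topological $x$--$y$ path $\tau_n'$ that shortcuts it and uses a subset of its edges — an almost injective path into $|G|$ has length at most $\sum_e\ell(e)<\infty$, restoring equicontinuity — prove the lemma for $(\tau_n')$, and then observe that $E(\tau_n')\subseteq E(\tau_n)$ so $E(\sigma)\subseteq\limf{E(\tau_n')}\subseteq\limf{E(\tau_n)}$, which is all the first assertion needs. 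A secondary technical point is checking that the uniform limit $\sigma$ is genuinely a topological path (continuous into $|G|$ with the right endpoints) and that "traverses an edge" behaves well under uniform limits in $|G|$; both follow from the local structure of $|G|$ — away from ends it is a $1$-complex, and near an end the cut-structure in Lemma~\ref{lfin} type facts control where a path can go — but they deserve an explicit, if short, argument.
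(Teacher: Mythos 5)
First, a remark on the ground truth: the paper does not prove Lemma~\ref{hp} at all --- it is quoted from \cite{hp} --- so there is no in-paper proof to compare yours against; I can only judge the plan on its own merits. For the first assertion your route is sound and essentially standard: shortcut each $\tau_n$ to an injective path $\tau_n'$ via Lemma~\ref{oriarc}, note that its image is an arc so its length is finite by Lemma~\ref{lcisle}, parametrise by normalised arc length to get a uniformly Lipschitz family, apply Arzel\`a--Ascoli in the compact metric space $\ltp \eqT |G|$, and deduce $E(\sigma)\subseteq\limf{E(\tau'_{a_n})}\subseteq\limf{E(\tau_{a_n})}$. Two small points you should make explicit: that an \emph{injective} path with vertex endpoints whose image meets the interior of an edge must contain the whole edge (this is where injectivity is used, and your ``middle third'' sentence currently elides it), and that compactness of $|G|$ needs $G$ connected.

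The genuine gap is in the ``moreover'' part, which is the part Theorem~\ref{finr} actually uses. There you must keep the original $\tau_{a_n}$ --- shortcutting may delete edges and reorder nothing, so it destroys precisely the inclusion $E(G)\setminus E(\sigma)\subseteq\limf{E(G)\setminus E(\tau_{a_n})}$ and the order statement --- and your argument rests on parametrising each $\tau_{a_n}$ by normalised arc length so that the parameters spent on an edge $e$ form a single interval of length $\ell(e)/L_n$. But the hypothesis ``no $\tau_n$ traverses any edge more than once'' gives you neither of these things: a topological path may make infinitely many partial incursions into an edge (entering and leaving through the same endvertex without ever crossing it), and such a path can have unbounded variation, so it admits no arc-length parametrisation and your equicontinuity claim fails before Arzel\`a--Ascoli is even invoked. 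What is missing is an explicit preprocessing step: replace each $\tau_n$ by a path with the same set of traversed edges, traversed in the same order and directions, that is affine on each traversed edge, collapses every partial incursion to the endvertex through which it enters, and is constant on each maximal subinterval mapped into $V(G)\cup\Omega$ (using that this set is totally disconnected in $|G|$, so connected images in it are single points, and that a path can only enter the interior of an edge through an endvertex). Only after this cleaning does the path have length exactly $\sum_{e\in E(\tau_n)}\ell(e)<\infty$, and only then do your Lipschitz bound, the Fatou-type measure argument showing $\sigma$ traverses each edge at most once, and the stabilisation of the finitely many crossing intervals (hence the order agreement on finite $F$) go through. As written, the reduction is asserted exactly where it most needs to be proved.
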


We can now finish the proof of our main result.

\begin{proof}[Proof of \Tr{finr}] 
By \Tr{Texist} there exists at least one such flow, so it only remains to prove the uniqueness of $i$. Suppose, to the contrary, there are two \cutr\ flows $i\neq f$ of finite energy in $N$ that both satisfy \ksl, and put $z:= i - f$. 

Since $i\neq f$, there is an oriented edge $\vec{e_0}$ with $z(\vec{e_0}) > 0$. Let $G_1\subseteq G_2 \subseteq G_3\ldots $ be a sequence of finite subgraphs of $G$ such that $\bigcup_n G_n = G$ and $e_0\in E(G_1)$.

Since both $i,f$ are \cutr, so is $z$. Thus $z$ induces a circulation $z_n$ on $\dbst{G_n}$ \fe\ $n$ (\dbst{G_n} is defined as in \Sr{secExist}). Since  $z_n$ a circulation in a finite network, there must exist  an oriented cycle ${C_n}$ in \dbst{G_n} \st\ $\vec{e_0}\in \arE(C_n)$ and $z(\are)>0$ \fe\ $ \are \in E({C_n})$. As $z_n$ is induced from the circulation $z$ of $G$ that, clearly, satisfies \ksl, ${C_n}$ cannot be a cycle of $G$; thus ${C_n}$ contains a contracted vertex of $\dbst{G_n}$. We are now going to make use of the sequence \seq{{C}} in order to construct an oriented circle ${C}$ in \fcg\ with $z(\are)>0$ \fe\ $ \are \in E({C})$. For this, let $P_n$ be the (oriented) path $C - e_0$.

Applying \Lr{hp} to $G$, letting $\tau_n$ be a topological path that traverses $P_n$ in a straight manner, we obtain a topological path \sig, and we can concatenate \sig\ with $e_0$ to obtain a closed topological path $\sig'$, which we may assume traverses $e_0$ in the same direction as the $C_n$ do. By the second sentence of \Lr{hp}, and since no $\tau_n$ visits a vertex more than once, \sig\ also visits no vertex more than once. Moreover, we may assume that every edge $e=xy\in E(\sig')$ is traversed by $\sig$ in the direction in which $z$ flows, in other words, $z(x,y)>0$ if $\sig$ visits $x$ before $y$. Indeed, recall that each $P_n$ traverses its edges in the direction in which $z$ flows. To make sure that $\sig$ traverses its edges in the same direction as the $P_n$ do, we can, before applying \Lr{hp} to obtain \sig, subdivide each edge $e=xy$ of $G$ by a dummy vertex $z$ into two edges $xz,zy$. Now the second sentence of \Lr{hp} implies that if $xz,zy$ are traversed by $\sig$ then they are traversed in the same order as in infinitely many of the $P_n$, which means that $\sig$ traverses $e$ in the same direction as these $P_n$, which is the direction in which $z$ flows along $e$.

By \Lr{oriarc}, there is a circle ${C}$ that shortcuts $\sig'$. This circle clearly violates \Cr{tinfkir}, since for one of its orientations $\vec{C}$ \tho\ $z(\are)>0$ \fe\ $ \are \in E(\vec{C})$. This contradiction completes the proof.

\end{proof}

\Tr{finr} is best possible in the sense that we can not drop any of its requirements. Indeed, to see why the condition of being \cutr\ is necessary, recall that the network of \fig{draynet} has several flows satisfying \ksl\ no matter how we choose its resistances. %let $G$ be a double ray, $p,q$ two adjacent vertices, and suppose that $\sum_{e\in E} r(e)<\infty$. Then there are two distinct $p,q$ flows $f,g$ of value 1 and finite energy in $G$ that both satisfy~\eqref{KII}: let $f$ be the flow satisfying $f(pq)=1$ and $f(e)=0$ for every edge $e\neq pq$, and let $g:= f-1$ (assuming both $f$ and $g$ direct each edge from left to right). 

To see that the condition $W(i)<\infty$ is necessary, consider the network $N$ of \fig{harmonic}. We will construct a non-trivial non-elusive \flo{p}{q}\ $f$ of intensity 0 in $N$, i.e.\ a circulation, that satisfies \ksl. Let $f(p,q)=1$, and let $f$ also send a flow of value 1 along the two edges incident with $p,q$, so that~\eqref{kI} is satisfied at both $p,q$. We can now assign a flow $f(e)$ to the perpendicular edge $e$ forming a 4-cycle $C$ with those three edges so that $C$ complies with~\eqref{KII}. Then, we can assign a flow to each of the two edges incident with $e$ so that the endvertices of $e$ comply with~\eqref{kI}. Continuing like this, we obtain a function $f$  that satisfies both Kirchhoff laws and is non-elusive.
\showFig{harmonic}{A network that has a non-constant non-elusive circulation satisfying \ksl. The numbers on the edges denote their resistances.}

In view of \Tr{finl} it is tempting to conjecture that the requirement $\sum r(e)< \infty$ in \Tr{finr} can be replaced by the weaker requirement $\ltpf{r} \eqT \fcg$. However, in \cite{kirch2} we show a counterexample to this conjecture.

% ***************************
\comment{  
	The second definition uses the concept of energy: the \defi{energy of the flow $f$} is defined by $W(f):= \sum_{e\in E(G)} f^2(e) r(e)$. ($W(f)$ is typically called ``energy'' in mathematics, but in physics it is called ``power''. In physics the term ``energy of a current'' is used for another concept ... The following theorem is well known.

	\begin{theorem}[]
	Let $W=W(I)$ be the infimum of $W(f)$ taken over all $\flos{p}{q}$ $f$ of value $I$ in $G$. Then, there is a unique $\flo{p}{q}$ $i$ in $G$ satisfying $W(i)=W$. This flow $i$ satisfies \eqref{KII}.
\end{theorem}

	This theorem enables us to state the following definition, which is equivalent to \Dr{defClas}.

	\begin{definition}\label{defTho}
	the \defi{electrical current} $i$ of value $I\in \R$ from $p$ to $q$ in $G$ is the (unique) $\flo{p}{q}$ of value $I\in \R$ having minimal energy.
\end{definition}

}
% ***************************

\section{Finite total conductance} \label{scond}

It is known that networks of finite total conductance have unique electrical currents. One way to prove this is to notice that in this case the corresponding random walk is recurrent \cite[Lemma 4.2]{WoessBook09}, which combined with Lyons' theorem (see \cite{LyoSim} or \cite{LyonsBook}), and a little additional effort, implies uniqueness of electrical currents \cite{SchleInf}. See \Sr{stoch} or \cite{LyonsBook,WoessBook09} for more on the relationship between electrical networks and random walks. In this section we give a non-probabilistic proof of this fact for the sake of completeness.

\begin{theorem}\label{finc}
Let $N=(G,r,p,q,I)$ be countable network with $\sum_{e\in E(G)} 1/r(e) < \infty$. Then there is a unique flow of finite energy satisfying \ksl. This flow is \cutr.
\end{theorem}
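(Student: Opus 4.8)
\emph{Overall plan and existence.} The plan is to obtain existence by energy minimisation, exactly as in \Sr{secExist}, and uniqueness by a summation-by-parts identity that the hypothesis $\sum_e 1/r(e)<\infty$ turns into something rigorous even in the absence of local finiteness. Write $c(e):=1/r(e)$, and note we may assume $G$ connected (otherwise restrict to the component of $p$). For existence I would let $H$ be the Hilbert space of all $g:E(G)\to\R$ with $W(g)<\infty$, with $\left<g,h\right>:=\sum_e g(e)h(e)r(e)$, and let $A\subseteq H$ be the set of \cutr\ $\flos{p}{q}$ of intensity $I$. Then $A$ is a non-empty closed affine subspace of $H$: it is non-empty because a unit flow along a single \pth{p}{q}\ is \cutr, and it is closed because each linear functional defining it — the net flow $\sum_{xy\in E(G)}g(x,y)$ out of a vertex, and $g\mapsto g(X,X')$ for a finite cut — is bounded on $H$, which for the vertex functionals follows from Cauchy--Schwarz together with $\sum_e c(e)<\infty$. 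Hence $A$ contains a unique element $i$ of minimum energy, and the standard perturbation argument (subtract a small circular flow around a finite cycle, which is again \cutr) shows $i$ satisfies \ksl. So a \cutr\ flow of finite energy satisfying \ksl\ exists; call it $i$.

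\emph{Uniqueness.} Let $f$ be an arbitrary flow of finite energy satisfying \ksl, and put $z:=f-i$, a circulation ($\sum_{xy\in E(G)}z(x,y)=0$ at every vertex) of finite energy that satisfies \ksl. The decisive observation is that $\sum_e 1/r(e)<\infty$ puts $z$ in $\ell^1(E)$: indeed, by Cauchy--Schwarz
\[
\sum_e|z(e)|\ \le\ \Bigl(\sum_e z(e)^2 r(e)\Bigr)^{1/2}\Bigl(\sum_e c(e)\Bigr)^{1/2}\ =\ W(z)^{1/2}\Bigl(\sum_e c(e)\Bigr)^{1/2}\ <\ \infty
\]
(the same bound gives $i,f\in\ell^1(E)$, so in particular $i$ is \cutr, although we shall not need this). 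Next, since $z$ satisfies \ksl\ and $G$ is connected, integrating the voltages $rz$ along paths produces a potential $\phi:V(G)\to\R$ with $z(x,y)=(\phi(x)-\phi(y))c(e)$. Now I would fix $M>0$ and truncate, setting $\phi^{(M)}:=(\phi\vee(-M))\wedge M$. Truncation is monotone and $1$-Lipschitz, so on every edge $xy$
\[
\bigl(\phi^{(M)}(x)-\phi^{(M)}(y)\bigr)\bigl(\phi(x)-\phi(y)\bigr)\ \ge\ \bigl(\phi^{(M)}(x)-\phi^{(M)}(y)\bigr)^2\ \ge\ 0 .
\]
Since $\phi^{(M)}$ is bounded and $z\in\ell^1(E)$, the double sum $\sum_{x\in V(G)}\phi^{(M)}(x)\sum_{xy\in E(G)}z(x,y)$ converges absolutely and so may be reordered edge by edge, which yields
\[
0\ =\ \sum_{x\in V(G)}\phi^{(M)}(x)\!\!\sum_{xy\in E(G)}\!\!z(x,y)\ =\ \sum_{xy\in E(G)}\bigl(\phi^{(M)}(x)-\phi^{(M)}(y)\bigr)\bigl(\phi(x)-\phi(y)\bigr)c(e) .
\]
Combined with the previous display this forces $\sum_{xy\in E(G)}(\phi^{(M)}(x)-\phi^{(M)}(y))^2 c(e)=0$, hence $\phi^{(M)}$ is constant; letting $M\to\infty$ shows $\phi$ is constant, so $z\equiv 0$ and $f=i$. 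Therefore $i$ is the unique flow of finite energy satisfying \ksl, and it is \cutr.

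\emph{Main obstacle.} The delicate point is legitimising the summation by parts $\sum_x\phi^{(M)}(x)\sum_{xy}z(x,y)=\sum_{xy}(\phi^{(M)}(x)-\phi^{(M)}(y))z(x,y)$: a vertex of $G$ may have infinite degree, so the inner sum is infinite, and one cannot hope to run the computation with $\phi$ in place of $\phi^{(M)}$, because a finite-energy potential on such a network need not be bounded. The hypothesis $\sum_e 1/r(e)<\infty$ removes both obstructions at once — it forces $z\in\ell^1(E)$, so every sum occurring is absolutely convergent, and passing to the bounded truncations $\phi^{(M)}$ keeps the reordering valid while still pinning $\phi$ down to a constant. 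Everything else is routine — the Hilbert-space existence argument, the two applications of Cauchy--Schwarz, and the potential $\phi$ supplied by \ksl — and, unlike the proof of \Tr{finr}, this argument needs none of the circle-and-cycle machinery of \Srs{sint}{suniq} (\Lr{sparse}, \Lr{oriarc}, \Cr{tinfkir}).
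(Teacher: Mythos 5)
Your proof is correct, but your uniqueness argument takes a genuinely different route from the paper's. Both proofs pivot on the same Cauchy--Schwarz observation, namely that $\sum_e|z(e)|\le W(z)^{1/2}(\sum_e 1/r(e))^{1/2}<\infty$ for the difference $z$ of two finite-energy flows satisfying \ksl. From there the paper stays combinatorial: it fixes an edge with $z>0$, chooses a finite edge set $F$ carrying all but a small amount of the $\ell^1$-mass of $z$, identifies all vertices not incident with $F$ into a single vertex $v^*$, and, after subtracting off the finitely many cycles through $v^*$, extracts a finite cycle of $G$ itself along which $z$ is consistently positive --- contradicting \ksl\ exactly as in the finite-graph argument of the introduction. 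You instead integrate \ksl\ into a potential $\phi$ and run the classical truncation/summation-by-parts argument to force $\sum_{xy\in E}(\phi^{(M)}(x)-\phi^{(M)}(y))^2/r(e)=0$ and hence $\phi$ constant, the membership $z\in\ell^1(E)$ being what legitimises the reordering of the double sum. Your route is essentially the deterministic core of the recurrence/Lyons argument that the paper mentions (and deliberately avoids) at the start of \Sr{scond}; it is the more standard analytic proof and avoids the contraction construction, while the paper's argument is purely graph-theoretic and produces an explicit cycle violating \ksl. The existence half --- Hilbert-space projection onto the closed affine set of \cutr\ flows followed by the cycle-perturbation step --- is the same adaptation of \Sr{secExist} that the paper itself invokes at the end of its proof, and the final assertion of the theorem then follows, as in the paper, because the unique flow must coincide with the \cutr\ one just constructed. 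Two cosmetic points: your reduction to connected $G$ should note that on any other component $z$ restricts to a finite-energy circulation satisfying \ksl, which the same potential argument annihilates; and the sign in $z(x,y)=(\phi(x)-\phi(y))/r(e)$ is a harmless convention.
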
 
% *** ---- *** 
\begin{proof}
Suppose there are distinct flows $f,g$ in $N$ both having finite energy and satisfying \ksl\, and consider their difference $z:= f-g$. It is not hard to check that the circulation $z$ has finite energy too, and it clearly also satisfies \ksl.

Pick an edge $f$ \st\ $z(f)> 0$. Since $\sum_{e\in E(G)} 1/r(e) < \infty$ and $\sum_{e\in E(G)} z^2(e)r(e) < \infty$, \ti\ a finite set of edges $F\subset E(G)$ \st\ 
\labtequ{F}{$\sum_{e\in E(G) \sm F} |z(e)| < |z(f)|$.}
Let $U$ be the set of vertices incident with an edge in $F$, and let $G^*$ be the graph obtained from $G$ after identifying all vertices in $V(G) \sm U$ into a single vertex $v^*$, keeping parallel edges if any arise. Note that $z$, considered as a function from $E(G)=E(G^*)$ to \R, is also a circulation in $G^*$; the sum $\sum \{z(e) \mid \text{e is incident with $v^*$} \}$ is well-defined even if $v^*$ has infinite degree by \eqref{F}, and it equals zero since the set $U$ is finite and every vertex in it satisfies \eqref{kI}. We may assume that the graph $G^*$ has only finitely many edges, for if $x,y\in V(G^*)$ are joined by an infinite set of edges $B$, then we may replace all these edges by a single edge carrying the flow $\sum_{e\in B} z(e)$.

We claim that $z$ must traverse a cycle in $G^*$ that does not visit $v^*$. Indeed, since $z$ is a circulation in the finite graph $G^*$, it must traverse some cycle $C_0$. If $C_0$ visits $v^*$, then we can subtract from $z$ a constant circulation along $C_0$ to obtain a new circulation $z_1$ that does not traverse $C_0$. By \eqref{F} we have $z_1(f)>0$ since no edges incident with $v^*$ lie in $E(G)\sm F$. If $z_1$ still traverses a cycle $C_1$ visiting $v^*$ we can subtract it to obtain $z_2$, and so on. Continuing like this we can, after finitely many steps, reach a circulation $z_k$ that traverses no cycle incident with $v^*$ and satisfies $z_k(f)>0$. But then, $z_k$ must traverse a cycle $C$ not incident with $v^*$, and by the construction of $z_k$ there follows that $z$ also traverses $C$. But $C$ is also a cycle in the original graph $G$, which contradicts the fact that $z$ satisfies \ksl.

Thus we have proved the uniqueness of a flow $i$ of finite energy satisfying \ksl. To prove that $i$ is \cutr, it would now suffice to show the existence of a \cutr\ flow $f$ in $N$ having finite energy and satisfying \ksl\, since uniqueness implies $f=i$. We already proved the existence of such a flow $f$ in \Tr{Texist} in the case that $G$ is \lf. The interested reader will be able to check that our second proof of \Tr{Texist} also works in the \nlf\ case if $\sum_{e\in E(G)} 1/r(e) < \infty$ ---although not necessarily otherwise, see \Sr{NLF}.
\end{proof}

\section{Non-locally-finite networks} \label{NLF}

In this section we discuss how the results of this paper behave \wrt\ \nlfg s.

Let us start with an example. The graph \g of \fig{fan} consists of two edges $e=pu, f=vq$ and an infinite family \fml{P}\ of independent \pths{u}{v}\ of length two. Let us suppose that all edges in the $P_n$ have the same resistance, although this makes little difference. It is straightforward to check that the corresponding network does not have a flow of minimum energy (with a fixed intensity $I$), and in fact no flow that satisfies \ksl. 

\showFig{fan}{A simple example showing that \nlf\ networks do not necessarily have flows satisfying \ksl.}

This seems to suggest that the results of this paper cannot be extended to \nlfg s, but nevertheless there is a way around this: call a function $f: V^2 \to \R$ a \defi{\refl} from $p$ to $q$ if it satisfies $f(x,y)=-f(y,x)$ \fe\ $x,y$, $f(x,y)=0$ if $xy\not\in E(G)$, and moreover it is \cutr\ (defined as in \Sr{sdefs}). Note that if \g is \lf\ then every \refl\ is a \cutr\ flow, since applying the non-elusiveness condition to the set of edges incident with a given vertex $x$ yields \kfl\ for $x$. It is worth mentioning that this definition effectively imposes \kfl\ to the points of \eti\ rather than the vertices of \G.

The interested reader will be able to show that \Tr{finr} extends to \nlfg s to assert the existence and uniqueness of a \refl\ of finite energy if the total resistance is finite (and thus the graph countable).
Indeed, all intermediate results we used are proved (either here or in the respective sources) for countable graphs as well, and the rest of the proof of \Tr{finl} easily extends to countable graphs.

\section{Relationship to stochastic processes and the discrete Dirichlet Problem} \label{stoch}

There is a well studied and fruitful relationship between electrical networks and random walks. Every electrical network gives rise to a random walk, in which the transition probabilities are proportional to the conductances of the corresponding edges and, conversely, every reversible irreducible Markov chain can be represented by an electrical network \cite{LyonsBook,WoessBook09}. 

It turns out that in a finite network the probability that random walk starting at a vertex $x$ reaches a fixed vertex $p$ before having visited another fixed vertex $q$ equals the potential of vertex $x$ induced by a voltage source imposing potential~1 at $p$ and potential 0 at $q$. It is natural to ask if this relationship carries over to the networks of the current paper, in other words, if the unique flow provided by \Tr{finr} coincides with the flow induced by the corresponding random walk on the same graph. This is however not the case: in the graph of \fig{draynet}, if the sum of the resistances is finite then the probability that random walk starting at a vertex $x\neq p$ to the left of $p$ visits $p$ before having visited $q$ is less than 1, since such a random walk is transient and so the probability that $p$ is never visited is positive. It follows that the corresponding flow fails to be \cutr. 

It would be interesting though to try to define a continuous random process, i.e.\ a brownian motion, on $\ltpf{r}$ in which the particle continues moving after reaching a boundary point of $\ltpf{r}$; for such a process it might be the case that the corresponding flow is indeed \cutr. To support this idea, we now show that given a network $N$ and a flow $f$ of finite energy in $N$ satisfying \ksl, it is possible to extend the potential function $P: V(G) \to \R$ from the vertices of \g to all of $\ltpf{r}$ in a natural way. Recall that given a flow $i$ satisfying \ksl, one usually defines $P$ by fixing $P(q)=0$ and then letting $P(u)= \sum_{\are\in \arE(R)} v(\are)$, where $R$ is any path from $q$ to $u$; \ksl\ implies that $P(u)$ does not depend on the choice of $R$ but only on its endpoints. The following lemma allows us to extend $P$ to the boundary $\partial^r G$ of $\ltpf{r}$ whenever we have a flow of finite energy satisfying \ksl. Note that for every such boundary point $x$ there is a ray of finite total resistance converging to $x$ in \ltpf{r}, see \cite[Lemma 4.1]{ltop}.

%function $P:V \to \R$ continuously to $\ltpf{r}$.

\comment{
\begin{problem} \label{infKirc}
Let $G$ be a \lf\ electrical network with resistances $r: E(G) \to \R^+_*$. Let $i$ be a flow satisfying \eqref{KII} with $W(i)<\infty$. Then every circle $C$ in $\ltpf{c}$ such that $E(C)$ is dense in $C$ and $\ell(C)<\infty$ satisfies~\eqref{KIIp} with respect to $i$. 
\end{problem}
}

\begin{lemma}\label{indep}	
Let $f$ be a flow of finite energy satisfying \ksl\ in a \lf\ network $N=(G,r,p,q,I)$, let $x\in \partial^r G$, and let $R$ be a ray of finite total resistance starting at $q$ and converging to $x$. Then $\sum_{\are\in \arE(R)} f(\are) r(\are)$ is well defined and does not depend on $R$.
\end{lemma}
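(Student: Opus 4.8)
The plan is to reduce the statement to a single Lipschitz-type bound on the potential function, deliberately avoiding the circle machinery of \Cr{tinfkir}: since $N$ is not assumed to have finite total resistance, the absolute-convergence estimate used there is unavailable, but here one only ever integrates $fr$ over paths of \emph{small} total resistance, where Cauchy--Schwarz suffices. For well-definedness, i.e.\ absolute convergence: since $R$ has finite total resistance, $\sum_{e\in E(R)}r(e)<\infty$, and since $f$ has finite energy, $\sum_{e\in E(R)}f^2(e)r(e)\le W(f)<\infty$; hence, by Cauchy--Schwarz,
\[
\sum_{e\in E(R)}|f(e)|\,r(e)\ \le\ \Big(\sum_{e\in E(R)}f^2(e)r(e)\Big)^{1/2}\Big(\sum_{e\in E(R)}r(e)\Big)^{1/2}\ <\ \infty ,
\]
so $\sum_{\are\in\arE(R)}f(\are)r(\are)$ is absolutely convergent and in particular well defined. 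The same estimate applied to any finite path $W$ of $G$ gives
\[
\Big|\sum_{\are\in\arE(W)}f(\are)r(\are)\Big|\ \le\ W(f)^{1/2}\Big(\textstyle\sum_{e\in E(W)}r(e)\Big)^{1/2},
\]
which is the only quantitative input needed.

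Next I would bring in the potential $P\colon V(G)\to\R$, given by $P(q)=0$ and $P(u)=\sum_{\are\in\arE(W)}f(\are)r(\are)$ for a $q$--$u$ path $W$; by \ksl\ this is independent of $W$, and more generally $P(w)-P(u)=\sum_{\are\in\arE(W)}f(\are)r(\are)$ for \emph{every} $u$--$w$ path $W$ (concatenate with a $q$--$u$ path and note that the extra closed walks decompose into finite cycles, each of zero contribution by \ksl). Writing $R=q\,r_1r_2\cdots$, the $n$-th partial sum of $\sum_{\are\in\arE(R)}f(\are)r(\are)$ is exactly $P(r_n)$, so by the first step this sum equals $\lim_n P(r_n)$ (and likewise for any other such ray). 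Since $G$ is locally finite, $\gid{G}=G$, so for vertices $u,w$ the distance $d_r(u,w)$ is the infimum of $\sum_{e\in E(W)}r(e)$ over all $u$--$w$ paths $W$; taking a nearly optimal such $W$ in the path estimate above yields the uniform bound
\[
|P(u)-P(w)|\ \le\ W(f)^{1/2}\,d_r(u,w)^{1/2}\qquad\text{for all }u,w\in V(G).
\]

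Finally, given two rays $R=q\,r_1r_2\cdots$ and $R'=q\,r'_1r'_2\cdots$ of finite total resistance both converging to $x$ in $\ltpf{r}$, we have $d_r(r_n,x)\to0$ and $d_r(r'_m,x)\to0$, hence $d_r(r_n,r'_m)\le d_r(r_n,x)+d_r(x,r'_m)\to0$ as $n,m\to\infty$; by the displayed bound, $|P(r_n)-P(r'_m)|\to0$. Since $\lim_n P(r_n)$ and $\lim_m P(r'_m)$ both exist by the first step, they are equal, i.e.\ $\sum_{\are\in\arE(R)}f(\are)r(\are)=\sum_{\are\in\arE(R')}f(\are)r(\are)$, as wanted. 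The step I expect to need the most care is precisely the passage between the abstract metric $d_r$ on the completion $\ltpf{r}$ and honest finite paths of $G$: this is exactly where local finiteness enters, and it is what upgrades ``compare $P$ along the two rays'' to the uniform estimate on $|P(u)-P(w)|$. Everything else is a routine use of Cauchy--Schwarz.
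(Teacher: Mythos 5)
Your proof is correct, but your argument for the independence from $R$ is genuinely different from the one in the paper. For well-definedness you use Cauchy--Schwarz on the edges of $R$ (finite total resistance of $R$ times finite energy of $f$), where the paper invokes \Lr{vectors}; these are interchangeable. For independence, the paper picks a second ray $S$, constructs an infinite ladder of disjoint \arcs{R}{S} whose union has finite total resistance, and telescopes \eqref{KII} over the finite cycles of that ladder, letting the contribution of the rungs tend to zero. You instead work directly in the metric completion: you observe that the potential $P$ satisfies the uniform H\"older-type bound $|P(u)-P(w)|\le W(f)^{1/2}\,d_r(u,w)^{1/2}$ (Cauchy--Schwarz along a near-geodesic \pth{u}{w}), identify the partial sums along each ray with values of $P$, and conclude because two rays converging to the same point $x\in\partial^r G$ have $d_r(r_n,r'_m)\to 0$. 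Your route is shorter, avoids the (slightly delicate) construction of the ladder, and as a bonus it already establishes that the extended potential $P:\ltpf{r}\to\R$ is uniformly continuous --- a fact the paper defers to ``the interested reader''. The paper's route is more combinatorial and stays closer to the graph; both ultimately rest on the same two ingredients, namely \eqref{KII} for finite cycles and the fact that finite energy plus small total resistance forces small total potential drop. One point worth making explicit in your write-up is that a closed walk decomposes as a formal sum of finite directed cycles, which is what lets you pass from \ksl\ (stated for cycles) to the well-definedness of $P$ and to the identity $P(w)-P(u)=\sum_{\are\in\arE(W)}f(\are)r(\are)$ for an arbitrary \pth{u}{w}\ $W$; this is standard and the paper also takes it for granted.
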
 
% *** ---- *** 
\begin{proof}[Proof (sketch).]
Well-definedness follows easily by \Lr{vectors}. To show independence from $R$, let $R,S$ be two such rays. Since, easily, $R$ and $S$ belong to the same end of $G$, it is possible to find an infinite sequence \seq{P}\ of disjoint $R,S$ paths \st\ $\sum_{e\in \bigcup_i P_i} r(e) < \infty$, the endpoint of $P_i$ in $R$ comes after the endpoint of $P_{i-1}$ \fe\ $i$, and similarly for the endpoints in $S$. The sum of potential differences $\sum_{e\in \bigcup_i P_i} f(e) r(e)$ is absolutely convergent by \Lr{vectors}. Adding potential differences along all finite cycles formed by $R,S$ and a pair of subsequent $P_i$, and using the fact that these cycles satisfy~\eqref{KII}, we obtain $\sum_{\are\in \arE(R)} f(\are) r(\are) - \sum_{\are\in \arE(S)} f(\are) r(\are) =0$.
\end{proof}

Thus we can extend $P$ to the boundary $\partial^r G$ by letting $P(x)= \sum_{\are\in \arE(R)} f(\are) r(\are)$ for some such ray $R$. The interested reader will be able to check that the function $P: \ltpf{r} \to \R$ defined this way is continuous provided $f$ has finite energy.

Conversely, one may ask whether given any continuous potential function $P: \partial^r G \to \R$ on the boundary of \ltpf{r}\ \ti\ a flow in \g inducing $P$. This is an instance of the well-known discrete Dirichlet Problem. Let us state it more precisely:

\begin{conjecture} \label{condir}
 Let \g be a \lfg\ and fix a function $r: E(G)\to \R^+$ \st\ \ltpf{r}\ is compact. Then, \fe\ continuous function $P: \partial^r G \to \R$ \ti\ a circulation in \g that satisfies \ksl\ and induces $P$ in the above sense.
\end{conjecture}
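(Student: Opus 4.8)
The plan is to recast the statement as a discrete Dirichlet problem and attack it by energy minimisation together with an exhaustion of $G$ by finite graphs. On a connected \lfg, Ohm's law shows that a circulation $f$ satisfies \ksl\ exactly when $f(x,y)=c(xy)\bigl(P(x)-P(y)\bigr)$ for some potential $P:V(G)\to\R$, where $c:=1/r$; and since $f$ has intensity $0$, \kfl\ holds at \emph{every} vertex, which in terms of $P$ reads $\sum_{y}c(xy)\bigl(P(x)-P(y)\bigr)=0$, i.e.\ $P$ is \emph{harmonic} at every vertex of $G$. Conversely, every bounded harmonic $P$ gives such a circulation, and $P$ is determined by $f$ only up to an additive constant. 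So, up to that constant, \Cnr{condir} is equivalent to: given a continuous $P_0:\partial^r G\to\R$, produce a continuous $P:\ltpf{r}\to\R$ that is harmonic at every vertex of $G$ and satisfies $P\restr\partial^r G=P_0$.

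The first step is to solve finite approximations. Fix finite connected subgraphs $G_1\subseteq G_2\subseteq\cdots$ with $\bigcup_n G_n=G$, absorbing any finite components of $G-G_n$ into $G_n$ so that every component $K$ of $G-G_n$ is infinite, and form $\dbst{G_n}$ by contracting each such $K$ to a vertex $v_K$. Since $\ltpf{r}$ is compact and $G$ is locally finite, the closure $\overline K$ of $K$ in $\ltpf{r}$ is compact and contains the infinite vertex set of $K$, hence has an accumulation point; as no vertex and no interior point of an edge can be an accumulation point of infinitely many vertices of a locally finite graph, that point lies in $\partial^r G$. Pick $z_K\in\overline K\cap\partial^r G$ and solve the finite Dirichlet problem on $\dbst{G_n}$ with the values $P_0(z_K)$ held fixed at the vertices $v_K$ and free elsewhere (classical: minimise the Dirichlet sum $\sum_{xy}c(xy)(u(x)-u(y))^2$ over all such extensions, in the spirit of \Tr{exist}). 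This produces $P_n:V(G)\to\R$, harmonic at every vertex of $G$, with $\min P_0\le P_n\le\max P_0$ by the maximum principle. Since each instance of the node law involves only finitely many vertices, a diagonal argument over a fixed enumeration of $V(G)$ extracts a subsequence converging pointwise to a limit $P:V(G)\to\R$ that is again harmonic at every vertex of $G$ and still satisfies $\min P_0\le P\le\max P_0$.

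The crux --- and the reason this is stated as a conjecture and not a theorem --- is \emph{boundary regularity}: that $P$ extends continuously to $\ltpf{r}$ with trace $P_0$ on $\partial^r G$, and that the associated circulation has finite energy (the latter being needed for the boundary potential in the sense of \Lr{indep} to be defined at all). Two ingredients should go into this. First, the metric ``mesh'' of $G$ near the boundary is controlled: compactness of $\ltpf{r}$, together with uniform-smallness statements in the spirit of \Lr{epsNLF}, should give that outside a sufficiently large finite set of edges every arc joining two vertices inside a small $d_r$-ball around a boundary point $z$ is itself $d_r$-short; consequently, for $n$ large, all contracted vertices $v_K$ with $\overline K$ close to $z$ carry boundary values $P_0(z_K)$ uniformly close to $P_0(z)$, so the boundary data ``seen'' by $\dbst{G_n}$ converges uniformly to $P_0$. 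Second, one needs a \defi{barrier} at each $z\in\partial^r G$: a function $b_z\ge 0$ on $V(G)$, superharmonic at every vertex, with $b_z(x)\to 0$ as $x\to z$ and $\inf\{\,b_z(x)\mid d_r(x,z)\ge\delta\,\}>0$ for every $\delta>0$. Given barriers, the standard comparison estimate --- squeezing each $P_n$ between $P_0(z)\pm\varepsilon\pm C\,b_z$ on a neighbourhood of $z$, uniformly in $n$ --- yields $\lim_{x\to z}P(x)=P_0(z)$; and finiteness of the energy should follow from a Royden-type orthogonal decomposition of the Dirichlet space applied to a fixed finite-energy function with the right boundary behaviour, producing such a function being itself part of the problem.

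I expect the construction of the barriers to be where the real difficulty lies. A natural candidate is a truncated concave increasing function of $d_r(\cdot,z)$, which is automatically small near $z$ and bounded below away from it; the obstruction is that superharmonicity of such a composite on an \emph{arbitrary} network need not hold, because the local structure of $G$ can spoil the mean-value inequality. One would try to exploit the compactness of $\ltpf{r}$ --- equivalently, a uniform-smallness statement for the edge lengths near the boundary --- to replace $d_r(\cdot,z)$ by a tamer ``regularised'' distance and then correct it into a genuine superharmonic barrier. Carrying this out in full generality, or exhibiting a compact $\ltpf{r}$ and a continuous $P_0$ admitting no such $P$, is the open heart of \Cnr{condir}.
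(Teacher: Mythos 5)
First, a point of order: the paper offers no proof of this statement --- it is posed as \Cnr{condir}, an open conjecture, with only the special case $\sum_{e\in E} r(e)<\infty$ credited to Carlson \cite{CarBou}. So there is nothing in the paper to compare your argument against, and your text, to its credit, does not claim to close the problem: it is a programme with an admitted hole. Your reduction is sound as far as it goes: for a connected graph, a circulation satisfying \ksl\ is exactly $f(x,y)=c(xy)(P(x)-P(y))$ for a potential $P$ harmonic at every vertex (intensity $0$ gives \kfl\ everywhere), so the conjecture is indeed a discrete Dirichlet problem for $\partial^r G$, and the exhaustion-plus-diagonalisation step does produce \emph{some} everywhere-harmonic $P$ with $\min P_0\le P\le\max P_0$.

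There are, however, two genuine gaps, only one of which you flag. (a) Your finite approximations feed a \emph{single} value $P_0(z_K)$ to each contracted vertex $v_K$; for the limit to have any chance of seeing $P_0$, you need that the closures in $\partial^r G$ of the components $K$ of $G-G_n$ have diameters tending to $0$ uniformly as $n\to\infty$. You assert this follows from compactness ``in the spirit of \Lr{epsNLF}'', but that lemma controls subarcs of a fixed circle or arc, not components of $G-G_n$, and the claim needs its own proof (or could conceivably fail, in which case the whole approximation scheme must be redesigned). (b) The step you do flag --- boundary regularity via barriers, plus finiteness of $W(f)$ --- is not a technicality but the entire content of the conjecture: without finite energy the boundary potential of \Lr{indep} is not even defined, so ``induces $P$ in the above sense'' is vacuous for your limit, and without barriers the diagonal limit may well be a constant. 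A candidate barrier built from $d_r(\cdot,z)$ has no reason to be superharmonic on an arbitrary network, as you note. In short: the proposal is a reasonable reduction and a plausible plan of attack, but it does not prove the statement, and the statement remains open in the paper as well.
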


\hyphenation{com-pac-ti-fi-cation}
A typical instance of the discrete Dirichlet Problem presupposes a \lfg\ \g with a fixed compactification, usually the \FC\ $|G|$ or the hyperbolic compactification, and asks whether every continuous function on the boundary can be induced by a circulation in \G; see for example \cite{BeSchrHar,KaWoDir,WoeDir}. In most cases, all edges of \g are considered with resistance~1. In comparison, \Cnr{condir} allows more flexibility as far as the choice of resistances and the choice of a compactification is concerned, but demands that these two choices are compatible with each other. 

An interesting aspect of \Cnr{condir} is the fact that every compact metric space is isometric to $\partial^r G$ for some appropriate choice of \g and $r$, see \cite[Section~4]{ltop}.

\Cnr{condir} has been proved by Carlson\cite{CarBou} for the special case that is perhaps most interesting from the point of view of the current paper: the case when $\sum_{e\in E} r(e)<\infty$.

\newcommand{\acknowledgements}{\section*{Acknowledgements}}

\acknowledgements{I am very grateful to Wolfgang Woess for several discussions on the topic, in particular for drawing my attention to the applicability of \Lr{hilb} in this context, as well as for acquainting me with the proof sketch at the beginning of \Sr{scond}}. I am also very grateful to Reinhard Diestel for motivating me to study electrical networks and for valuable discussions.

\comment{this was used previously instead of \Lr{epsNLF}; it is probably useless now
	\begin{lemma} \label{eps} 
	Suppose $G$ is locally finite, and let $C$ be a circle or arc in \ltp\ such that $E(C)$ is dense in $C$. For every $\epsilon\in \R^+$ there is an $n\in\N$ such that for every maximal subarc of $C$ in $\ltpnv$ connecting two vertices $v,w$ there holds  $d(v,w)<\epsilon$.
\end{lemma}
% *** ---- ***
	\begin{proof}
%Let $\sigma: I_{\elll(C)} \to C$ be proper.

	Suppose to the contrary that there is an $\epsilon$ such that for every $n\in\N$ there is a maximal subarc of $C$ in \ltpnv\ %with length at least $\epsilon$. 
		the endvertices of which have distance at least $\epsilon$.
	As the graph is \lf, there are finitely many maximal subarcs of $C$ in \ltpnv. Let $\mathcal R_n$ be the set of inclusion-maximal subarcs of $C$ in \ltpnv\ %that have length at least $\epsilon$. 
the endvertices of which have distance at least $\epsilon$. Clearly, each element of $\mathcal R_n$ is a subarc of some element of $\mathcal R_{n-1}$ for $n>1$. Thus, by \Lr{lemma:infinity} [explain], there is a sequence $(R_n)_{\nin}$ such that $R_n$ is a maximal subarc of $C$ in \ltpnv\ whose endvertices have distance at least $\epsilon$ and $R_n\supseteq R_{n+1}$. As $C$ is a continuous image of the compact space $S^1$ or $[0,1]$ it is compact itself, so the endpoints of the $R_n$ have accumulation points in $C$. Thus, we can easily find distinct points $x,y\in C$ and an infinite subsequence $(R'_n)$ of $(R_n)$ such that one of the subarcs $R$ of $C$ with endpoints $x,y$ is contained in $R'_n$ for every $n$. %and satisfies $\elll(R)\geq \epsilon$. 
This however, contradicts the fact that  $E(C)$ is dense in $C$; indeed, for every edge $e$ there is an integer $m$ such that $e$ does not lie in \ltpnv\ for any $n>m$. But if $e$ meets $R$ then it also meets $R_n$ for every $n$, a contradiction.
\end{proof}
}

\note{

\begin{problem}
If \ltp\ is compact, is the \cutr\ electrical flow in \g a sum of flows along topological paths and circles in \ltp? This could help generalise \Tr{finr} to the Floyd boundary.
\end{problem}
}

%\begin{abstract}
%\end{abstract}

\bibliographystyle{plain}
\bibliography{collective}
\end{document}